\newcommand{\bfi}{\bfseries\itshape}
\DeclareMathOperator*{\ext}{ext}
\DeclareMathOperator{\ad}{ad}
\DeclareMathOperator{\Ad}{Ad}
\DeclareMathOperator{\dexp}{dexp}
\DeclareMathOperator{\ddexp}{ddexp}
\DeclareMathOperator{\SO}{SO(3)}
\newcommand{\so}{\ensuremath{\mathfrak{so}(3)}}
\newcommand{\tr}[1]{\mbox{tr}\ensuremath{\negthickspace\bracket{#1}}}
\newcommand{\norm}[1]{\ensuremath{\left\| #1 \right\|}}
\newcommand{\bracket}[1]{\ensuremath{\left[ #1 \right]}}
\newcommand{\braces}[1]{\ensuremath{\left\{ #1 \right\}}}
\newcommand{\parenth}[1]{\ensuremath{\left( #1 \right)}}
\newcommand{\refeqn}[1]{(\ref{eqn:#1})}
\journalname{Frontiers of Mathematics in China}
\date{\today}
\title{General Techniques for Constructing Variational Integrators}
\author{Melvin Leok \and Tatiana Shingel}
\institute{Department of Mathematics, University of California, San Diego.
\email{\{mleok,tshingel\}@math.ucsd.edu}}
\begin{document}
\maketitle
\begin{abstract}The numerical analysis of variational integrators relies on variational error analysis, which relates the order of accuracy of a variational integrator with the order of approximation of the exact discrete Lagrangian by a computable discrete Lagrangian. The exact discrete Lagrangian can either be characterized variationally, or in terms of Jacobi's solution of the Hamilton--Jacobi equation. These two characterizations lead to the Galerkin and shooting-based constructions for discrete Lagrangians, which depend on a choice of a numerical quadrature formula, together with either a finite-dimensional function space or a one-step method. We prove that the properties of the quadrature formula, finite-dimensional function space, and underlying one-step method determine the order of accuracy and momentum-conservation properties of the associated variational integrators. We also illustrate these systematic methods for constructing variational integrators with numerical examples.

\end{abstract}

\section{Introduction}
Variational integrators are a particularly popular class of geometric numerical integrators, due to their symplectic and momentum conservation properties, and their good energy behavior. While most of the standard approaches for constructing variational integrators involve piecewise polynomial interpolants, and high-order quadrature formulas, we will show that existing techniques from approximation theory, numerical quadrature, and one-step methods, can be incorporated systematically into the variational integration framework.

In this paper we will discuss two general approaches to constructing variational integrators. The first of this is the Galerkin variational integrator construction, which relies on a variational characterization of the exact discrete Lagrangian, and involves the choice of a numerical quadrature formula, and a finite-dimensional function space. The second approach relies on the characterization of the exact discrete Lagrangian in terms of Jacobi's solution of the Hamilton--Jacobi equation, and involves the choice of a numerical quadrature formula, and an underlying one-step method.

\subsection{Discrete Lagrangian Mechanics}\index{discrete mechanics}
Discrete Lagrangian mechanics~\citep{MaWe2001} is based on a discrete analogue of Hamilton's principle, referred to as  the {\bfi discrete Hamilton's principle},
\[ \delta \mathbb{S}_d = 0,\]
where the {\bfi discrete action sum}, $\mathbb{S}_d:Q^{n+1}\rightarrow \mathbb{R}$, is given by
\[ \mathbb{S}_d(q_0,q_1,\ldots,q_n) = \sum\nolimits_{i=0}^{n-1} L_d(q_i, q_{i+1}) .\]
The {\bfi discrete Lagrangian}, $L_d:Q\times Q\rightarrow \mathbb{R}$, is a generating function of the symplectic flow, and is an approximation to the {\bfi exact discrete Lagrangian},
\begin{equation}
L_d^E(q_0,q_1;h)=\int_0^h L(q_{01}(t),\dot q_{01}(t)) dt,\label{exact_Ld_Jacobi}
\end{equation}
where $q_{01}(0)=q_0,$ $q_{01}(h)=q_1,$ and $q_{01}$ satisfies the
Euler--Lagrange equation in the time interval $(0,h)$. The exact discrete Lagrangian is related to the Jacobi solution of the Hamilton--Jacobi equation. Alternatively, one can characterize the exact discrete Lagrangian in the following way,
\begin{equation}
L_d^E(q_0,q_1;h)=\ext_{\substack{q\in C^2([0,h],Q) \\ q(0)=q_0, q(h)=q_1}} \int_0^h L(q(t), \dot q(t)) dt.\label{exact_Ld_variational}
\end{equation}
As we will see in the remainder of the paper, these two characterizations of the exact discrete Lagrangian will lead to two general techniques for constructing variational integrators.

The discrete variational principle then yields the {\bfi discrete Euler--Lagrange (DEL)} equation,
\begin{equation}\label{DEL}
 D_2 L_d(q_{k-1},q_k)+D_1 L_d(q_k,q_{k+1})=0,
\end{equation}
which implicitly defines the {\bfi discrete Lagrangian map} $F_{L_d}:(q_{k-1},q_k)\mapsto(q_k,q_{k+1})$ for initial conditions $(q_0,q_1)$ that are sufficiently close to the diagonal of $Q\times Q$. This is equivalent to the {\bfi implicit discrete Euler--Lagrange (IDEL)} equations,
\begin{equation}
p_k=-D_1 L_d(q_k, q_{k+1}),\qquad p_{k+1}=D_2 L_d(q_k, q_{k+1}),\label{IDEL}
\end{equation}
which implicitly defines the {\bfi discrete Hamiltonian map} $\tilde{F}_{L_d}:(q_k,p_k)\mapsto(q_{k+1},p_{k+1})$, where the discrete Lagrangian is the Type I generating function of the symplectic transformation.

\subsection{Desirable Properties of Variational Integrators}
We summarize the many geometric structure-preserving properties of variational integrators, which account for their popularity in simulations involving mechanical systems.
\begin{description}
\item [\it Symplecticity.] Given a discrete Lagrangian $L_d$, one obtains a discrete fiber derivative, $\mathbb{F}L_d:(q_0,q_1)\mapsto(q_0, -D_1 L_d(q_0,q_1))$. Variational integrators are symplectic, i.e, the pullback under $\mathbb{F}L_d$ of the canonical symplectic form $\Omega$ on the cotangent bundle $T^*Q$, is preserved. Pushing-forward the discrete Euler--Lagrange equations yield a symplectic-partitioned Runge--Kutta method.

\item [\it Momentum Conservation.] Noether's theorem states that if a Lagrangian is invariant under the lifted action of a Lie group, the associated momentum is preserved by the flow. If a discrete Lagrangian is invariant under the diagonal action of a symmetry group, a discrete Noether's theorem holds, and the discrete flow preserves the discrete momentum map. For PDEs with a uniform spatial discretization, a backward error analysis implies approximate spatial momentum conservation \citep{OlWeWu2004}. 

\item [\it Approximate Energy Conservation.] While variational integrators do not exactly preserve energy, backward error analysis \citep{BeGi1994, Ha1994, HaLu1997, Re1999} shows that it preserves a modified Hamiltonian that is close to the original Hamiltonian for exponentially long times. In practice,  the energy error is bounded and does not exhibit a drift. This is the temporal analogue of the approximate momentum conservation result for PDEs, as energy is the momentum map associated with time invariance.

\item [\it Applicable to a Large Class of Problems.] The discrete variational approach is very general, and allows for the construction of geometric structure-preserving numerical integrators for PDEs \citep{LeMaOrWe2003}, nonsmooth collisions \citep{FeMaOrWe2003}, stochastic systems \citep{BROw2009}, nonholonomic systems \citep{CoMa2001}, and constrained systems \citep{LeMaOr2008}. Furthermore, Dirac structures and mechanics allows for interconnections between Lagrangian systems, thereby providing a unified simulation framework for multiphysics systems.

\item [\it Generates a Large Class of Methods.] A variational integrator can be constructed by choosing a finite-dimensional function space, and a numerical quadrature method~\citep{Le2004}. By leveraging techniques from approximation theory, numerical analysis, and finite elements, one can construct variational integrators that are appropriate for problems that evolve on Lie groups \citep{LeLeMc2007a, LeLeMc2007b} and homogeneous spaces \citep{LeLeMc2009b}, or exhibit multiple timescales \citep{StGr2009, LeMaOrWe2003}.
\end{description}

\subsection{Variational Error Analysis and Discrete Noether's Theorem}

The variational integrator approach to constructing symplectic integrators has a few important advantages from the point of view of numerical analysis. In particular, the task of proving properties of the discrete Lagrangian map $F_{L_d}:Q\times Q\rightarrow Q\times Q$ reduces to verifying certain properties of the discrete Lagrangian instead. Here, we summarize the results from Theorems 1.3.3 and 2.3.1 of \cite{MaWe2001} that relate to the order of accuracy and momentum conservation properties of the variational integrator.

\paragraph{Discrete Noether's theorem.} Given a discrete Lagrangian $L_d:Q\times Q\rightarrow \mathbb{R}$ which is invariant under the diagonal action of a Lie group $G$ on $Q\times Q$, then the discrete Lagrangian momentum map, $J_{L_d}:Q\times Q\rightarrow \mathfrak{g}^*$, given by
\[ J_{L_d}(q_k, q_{k+1})\cdot \xi=\langle -D_1 L_d(q_k, q_{k+1}, \xi_Q(q_k)\rangle \]
is invariant under the discrete Lagrangian map, i.e.,  $J_{L_d}\circ F_{L_d}=J_{L_d}$.

\paragraph{Variational error analysis.}
The natural setting for analyzing the order of accuracy of a variational integrator is the variational error analysis framework introduced in \cite{MaWe2001}. In particular, Theorem 2.3.1 of \cite{MaWe2001} states that if a discrete Lagrangian, $L_d:Q\times Q\rightarrow\mathbb{R}$, approximates the exact discrete Lagrangian, $L_d^E:Q\times Q\rightarrow\mathbb{R}$, given in \eqref{exact_Ld_Jacobi} and \eqref{exact_Ld_variational} to order $p$, i.e.,
\[ L_d(q_0, q_1;h)=L_d^E(q_0,q_1;h)+\mathcal{O}(h^{p+1}),\]
then the discrete Hamiltonian map, $\tilde{F}_{L_d}:(q_k,p_k)\mapsto(q_{k+1},p_{k+1})$, viewed as a one-step method, is order $p$ accurate.

\section{Galerkin Variational Integrators}
The variational characterization of the exact discrete Lagrangian~\eqref{exact_Ld_variational},
\[L_d^E(q_0,q_1;h)=\ext_{\substack{q\in C^2([0,h],Q) \\ q(0)=q_0, q(h)=q_1}} \int_0^h L(q(t), \dot q(t)) dt,\]
naturally leads to the Galerkin approach to constructing variational integrators. This involves replacing the infinite-dimensional function space $C^2([0,h],Q)$ with a suitably chosen finite-dimensional subspace, and the integral with a numerical quadrature formula. In particular, it is common to choose an interpolatory approximation space, so as to satisfy the boundary conditions at time $0$ and $h$ in a straightforward manner. 

We now recall the construction of higher-order Galerkin
variational integrators, as originally described in \cite{MaWe2001}.
Given a Lie group $G$, the associated {\bfi state space} is
given by the tangent bundle $TG$. In addition, the dynamics on $G$
is described by a {\bfi Lagrangian}, $L:TG\rightarrow\mathbb{R}$.
Given a time interval $[0,h]$, the {\bfi action map},
$\mathfrak{S}:C^2([0,h],G)\rightarrow\mathbb{R}$, is given by
\[\mathfrak{S}(g)\equiv \int_0^h L(g(t),\dot g(t)) dt.\]

We approximate the action map, by numerical quadrature, to yield $\mathfrak{S}^s:C^2([0,h],G)\rightarrow
\mathbb{R}$,
\[\mathfrak{S}^s(g)\equiv h \sum\nolimits_{i=1}^s b_i L(g(c_i h), \dot g(c_i
h)),\] where $c_i\in[0,1]$, $i=1,\ldots, s$ are the quadrature
points, and $b_i$ are the quadrature weights.

Recall that the discrete Lagrangian should be an approximation of the form
\[L_d(g_0,g_1,h)\approx \ext_{\substack{g\in C^2([0,h],G),\\g(0)=g_0, g(h)=g_1}} \mathfrak{S}(g)\, .\]
If we restrict the extremization procedure to the subspace spanned
by the interpolatory function that is parameterized by $s+1$
internal points, $\varphi:G^{s+1}\rightarrow
C^2([0,h],G)$, we obtain the following discrete
Lagrangian,
\begin{align*}
L_d(g_0,g_1) &= \ext_{\substack{g^\nu\in G;\\g^0=g_0; g^s=g_1}} \mathfrak{S}
(\varphi(g^\nu;\cdot))\\
&= \ext_{\substack{g^\nu\in G;\\ g^0=g_0; g^s=g_1}} h\sum\nolimits_{i=1}^s b_i
L(T\varphi(g^\nu;c_i h)).
\end{align*}

\subsection{Galerkin Lie Group Variational Integrators}

A particularly novel and nontrivial application of this approach is the construction of Lie group variational integrators. This is based on the underlying idea of Lie group integrators~\cite{IsMuNoZa2000}, which is to express the update map of the numerical scheme in terms of the exponential map,
\[ g_1 = g_0 \exp(\xi_{01})\, ,\]
and thereby reduce the problem to finding an appropriate Lie
algebra element $\xi_{01}\in\mathfrak{g}$, such that the update
scheme has the desired order of accuracy. This is a desirable
reduction, as the Lie algebra is a vector space, and as such the
interpolation of elements can be easily defined. In our
construction, the interpolatory method we use on the Lie group
relies on interpolation at the level of the Lie algebra.

Since the momentum conservation properties of variational integrators depend on the discrete Lagrangian being invariant under the diagonal action of the symmetry group, we will introduce a construction that yields a $G$-invariant discrete Lagrangian whenever the continuous Lagrangian is $G$-invariant. This is achieved through the use of $G$-equivariant interpolatory functions, and in particular, natural charts on $G$, which we will now discuss. This then leads to the issue of reduction of higher-order Lie group integrators.

\paragraph{Group-equivariant interpolants.} The interpolatory function is $G$-equivariant if
\[\varphi(g g^\nu;t)=g \varphi( g^\nu;t ).\]
\begin{proposition}\label{gvi:lemma:invariant_Ld}
If the interpolatory function $\varphi(g^\nu;t)$ is
$G$-equivariant, and the Lagrangian, $L:TG\rightarrow\mathbb{R}$, is
$G$-invariant, then the discrete Lagrangian, $L_d:G\times
G\rightarrow \mathbb{R}$, given by
\[
L_d(g_0,g_1)= \ext_{\substack{g^\nu\in G;\\ g^0=g_0; g^s=g_1}} h\sum\nolimits_{i=1}^s
b_i L(T\varphi(g^\nu;c_i h)),
\]
is $G$-invariant.
\end{proposition}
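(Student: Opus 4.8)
The plan is to verify $G$-invariance directly, namely that $L_d(g g_0, g g_1) = L_d(g_0, g_1)$ for every $g \in G$, by using equivariance of $\varphi$ to reparametrize the internal nodes and invariance of $L$ to absorb the group element. Throughout, I would write $\lambda_g : G \to G$, $\lambda_g(a) = ga$, for left translation, so that $G$-equivariance of the interpolant reads $\varphi(\lambda_g(g^\nu); t) = \lambda_g(\varphi(g^\nu; t))$ and $G$-invariance of the Lagrangian reads $L \circ T\lambda_g = L$, where $T\lambda_g : TG \to TG$ is the tangent lift of $\lambda_g$.

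First I would unwind the definition at the translated endpoints,
\[
L_d(g g_0, g g_1) = \ext_{\substack{g^\nu \in G; \\ g^0 = g g_0, \, g^s = g g_1}} h \sum\nolimits_{i=1}^s b_i L(T\varphi(g^\nu; c_i h)),
\]
and perform the change of internal variables $g^\nu = g \tilde g^\nu$. Because $\lambda_g$ is a diffeomorphism of $G$, the induced map $(g^\nu)_{\nu=0}^s \mapsto (\tilde g^\nu)_{\nu=0}^s$ is a diffeomorphism of $G^{s+1}$ that carries the constraint set $\{g^0 = g g_0,\, g^s = g g_1\}$ bijectively onto $\{\tilde g^0 = g_0,\, \tilde g^s = g_1\}$.

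Next I would pass the group element through both the interpolant and the Lagrangian. The point-level equivariance $\varphi(g \tilde g^\nu; t) = g\,\varphi(\tilde g^\nu; t)$ lifts to the tangent bundle: differentiating in $t$ and applying the chain rule to $\lambda_g$ gives $T\varphi(g \tilde g^\nu; t) = T\lambda_g \cdot T\varphi(\tilde g^\nu; t)$, since $T\varphi(g^\nu;t)$ denotes the pair $(\varphi(g^\nu;t), \dot\varphi(g^\nu;t)) \in TG$. Invariance of $L$ under the lifted action then yields
\[
L(T\varphi(g^\nu; c_i h)) = L(T\lambda_g \cdot T\varphi(\tilde g^\nu; c_i h)) = L(T\varphi(\tilde g^\nu; c_i h)),
\]
so every summand, and hence the entire quadrature sum, is left unchanged.

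Combining these observations, the extremization over $g^\nu$ subject to the translated boundary data coincides with the extremization of the identical objective over $\tilde g^\nu$ subject to the original boundary data, which is precisely $L_d(g_0, g_1)$. The only point requiring care is that $\ext$ denotes a critical value rather than a maximum or minimum; this causes no difficulty, since a diffeomorphic reparametrization of the domain sends critical points to critical points and preserves the objective value attained there. The sole substantive step is the promotion of the equivariance of $\varphi$ from $G$ to its tangent lift, which is immediate from the chain rule once the convention for $T\varphi$ is fixed.
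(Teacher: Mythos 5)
Your proof is correct and follows essentially the same route as the paper's: translate the boundary data, reparametrize the internal nodes by left translation, pass $g$ through the interpolant via its tangent-lifted equivariance, and absorb it with the $G$-invariance of $L$. Your added remarks---that the reparametrization is a diffeomorphism of the constraint set and that critical values are preserved under such a change of variables---make explicit two points the paper leaves implicit, but they do not change the argument.
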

\begin{proof}
\begin{align*}
L_d(g g_0, g g_1) &= \ext_{\substack{\tilde g^\nu\in G;\\ \tilde g^0=g g_0;
\tilde g^s=g g_1}}
h\sum\nolimits_{i=1}^s b_i L(T\varphi(\tilde g^\nu;c_i h)),\\
&= \ext_{\substack{g^\nu\in g^{-1}G;\\ gg^0=gg_0; gg^s=gg_1}}
h\sum\nolimits_{i=1}^s b_i L(T\varphi( g g^\nu;c_i h)),\\
&= \ext_{\substack{g^\nu\in G;\\ g^0=g_0; g^s=g_1}}
h\sum\nolimits_{i=1}^s b_i L(TL_g\cdot T\varphi(g^\nu;c_i h)),\\
&= \ext_{\substack{g^\nu\in G;\\ g^0=g_0; g^s=g_1}}
h\sum\nolimits_{i=1}^s b_i L(T\varphi(g^\nu;c_i h)),\\
&= L_d(g_0,g_1),
\end{align*}
where we used the $G$-equivariance of the interpolatory function
in the third equality, and the $G$-invariance of the Lagrangian in
the forth equality.
\end{proof}

\begin{remark}
While $G$-equivariant interpolatory functions provide a computationally efficient method of constructing $G$-invariant discrete Lagrangians, we can construct a $G$-invariant discrete Lagrangian (when $G$ is compact) by averaging an arbitrary discrete Lagrangian. In particular, given a discrete Lagrangian $L_d:Q\times Q\rightarrow \mathbb{R}$, the averaged discrete Lagrangian, given by
\[ \bar{L}_d(q_0,q_1) = \frac{1}{|G|}\int_{g\in G} L_d(g q_0, g q_1) dg\]
is $G$-equivariant. Therefore, in the case of compact symmetry groups, a $G$-invariant discrete Lagrangian always exists.
\end{remark}

\paragraph{Natural charts.}\label{gvi:subsec:natural_charts}
Following the construction in \cite{MaPeSh1999}, we use the group
exponential map at the identity, $\exp_e:\mathfrak{g}\rightarrow
G$, to construct a $G$-equivariant interpolatory function, and a
higher-order discrete Lagrangian. As shown in
Lemma~\ref{gvi:lemma:invariant_Ld}, this construction yields a
$G$-invariant discrete Lagrangian if the Lagrangian itself is
$G$-invariant.

In a finite-dimensional Lie group $G$, $\exp_e$ is a local
diffeomorphism, and thus there is an open neighborhood $U\subset
G$ of $e$ such that $\exp_e^{-1}:U\rightarrow
\mathfrak{u}\subset\mathfrak{g}$. When the group acts on the left,
we obtain a chart $\psi_g:L_g U \rightarrow \mathfrak{u}$ at $g\in
G$ by
\[\psi_g=\exp_e^{-1}\circ L_{g^{-1}}.\]
We would like to construct an interpolatory function that is described by a set of control points $\{g^\nu\}_{\nu=0}^s$ in the group $G$ at control times $0=d_0<d_1<d_2<\ldots<d_{s-1}<d_s=1$. Our natural chart based at $g^0$ induces a set of control points $\xi^\nu=\psi_{g^0}^{-1}(g^\nu)$ in the Lie algebra $\mathfrak{g}$ at the same control times. Let $\tilde{l}_{\nu,s}(t)$ denote the Lagrange polynomials associated with the control times $d_\nu$, which yields an interpolating polynomial at the level of the Lie algebra,
\[ \xi_d(\xi^\nu;\tau h)=\sum\nolimits_{\nu=0}^s \xi^\nu \tilde{l}_{\nu,s}(\tau).\]
Applying $\psi_{g^0}^{-1}$ yields an interpolating curve in $G$ of the form,
\[\varphi(g^\nu;\tau h)=\psi_{g^0}^{-1}
\Big(\sum\nolimits_{\nu=0}^s
\psi_{g^0}(g^\nu)\tilde{l}_{\nu,s}(\tau)\Big),\]
where $\varphi(d_\nu h)=g^\nu$ for $\nu=0,\ldots, s$. Furthermore, this interpolant is $G$-equivariant, as shown in the following Lemma.
\begin{proposition}
The interpolatory function given by
\[\varphi(g^\nu;\tau h)=\psi_{g^0}^{-1}
\Big(\sum\nolimits_{\nu=0}^s
\psi_{g^0}(g^\nu)\tilde{l}_{\nu,s}(\tau)\Big),\] is
$G$-equivariant.
\end{proposition}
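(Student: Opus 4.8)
The plan is to use the two defining properties of the natural chart $\psi_g = \exp_e^{-1} \circ L_{g^{-1}}$ directly, and the essential point to keep track of is that the chart is anchored at the base control point $g^0$, which is itself transported when we act on the left. Concretely, replacing every control point $g^\nu$ by $g g^\nu$ also sends the base point $g^0$ to $g g^0$, so the interpolant $\varphi(g g^\nu; \tau h)$ is built from the chart $\psi_{g g^0}$ rather than $\psi_{g^0}$. The entire argument reduces to understanding how $\psi_{g g^0}$ relates to $\psi_{g^0}$.

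First I would check that the Lie algebra coordinates of the control points are invariant under the diagonal left action. Directly from the definition,
\[
\psi_{g g^0}(g g^\nu) = \exp_e^{-1}\bigl((g g^0)^{-1} g g^\nu\bigr) = \exp_e^{-1}\bigl((g^0)^{-1} g^\nu\bigr) = \psi_{g^0}(g^\nu),
\]
because the leading $g$ cancels in $(g g^0)^{-1}(g g^\nu) = (g^0)^{-1} g^\nu$. Hence the interpolating Lie algebra element $\sum_{\nu} \psi_{g^0}(g^\nu)\,\tilde{l}_{\nu,s}(\tau)$ inside the inverse chart is literally unchanged when we pass from the points $g^\nu$ to the points $g g^\nu$.

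Next I would record how the inverse chart behaves under translation of the base point. Since $\psi_g^{-1} = L_g \circ \exp_e$, for any $\eta$ in the coordinate neighborhood we have
\[
\psi_{g g^0}^{-1}(\eta) = (g g^0)\exp_e(\eta) = g\bigl(g^0 \exp_e(\eta)\bigr) = L_g\bigl(\psi_{g^0}^{-1}(\eta)\bigr),
\]
i.e.\ $\psi_{g g^0}^{-1} = L_g \circ \psi_{g^0}^{-1}$. Combining the two observations, and abbreviating the (invariant) interpolating Lie algebra element by $\eta(\tau) = \sum_{\nu} \psi_{g^0}(g^\nu)\,\tilde{l}_{\nu,s}(\tau)$, gives
\[
\varphi(g g^\nu; \tau h) = \psi_{g g^0}^{-1}\bigl(\eta(\tau)\bigr) = L_g\bigl(\psi_{g^0}^{-1}(\eta(\tau))\bigr) = g\,\varphi(g^\nu; \tau h),
\]
which is exactly the stated $G$-equivariance.

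I do not anticipate a genuine obstacle, since the result is a clean consequence of the left-equivariance built into the exponential chart; the only place demanding care is the bookkeeping of the base point, as mistakenly keeping $\psi_{g^0}$ fixed (rather than updating it to $\psi_{g g^0}$) would destroy the cancellation in the first step. One should also keep in mind that these manipulations are valid only while the relevant coordinates stay within the neighborhood $\mathfrak{u}$ on which $\exp_e^{-1}$ is a diffeomorphism, which holds for control points that are sufficiently close together.
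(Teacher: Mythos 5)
Your proof is correct and follows essentially the same route as the paper's: both arguments rest on the cancellation $\psi_{gg^0}(gg^\nu)=\exp_e^{-1}\bigl((g^0)^{-1}g^\nu\bigr)=\psi_{g^0}(g^\nu)$ together with the factorization $\psi_{gg^0}^{-1}=L_g\circ\psi_{g^0}^{-1}$, which the paper simply inlines into a single chain of equalities rather than isolating as two separate observations. Your added remark about staying within the neighborhood $\mathfrak{u}$ is a reasonable precaution the paper leaves implicit.
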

\begin{proof}
\begin{align*}
\varphi(gg^\nu;\tau h) &= \psi_{(gg^0)}^{-1}
\Big(\sum\nolimits_{\nu=0}^s
\psi_{gg^0}(gg^\nu)\tilde{l}_{\nu,s}(\tau)\Big)\\
&= L_{gg^0} \exp_e \Big(\sum\nolimits_{\nu=0}^s
\exp_e^{-1}((gg^0)^{-1}(gg^\nu))\tilde{l}_{\nu,s}(\tau)\Big)\\
&= L_g L_{g^0} \exp_e \Big(\sum\nolimits_{\nu=0}^s
\exp_e^{-1}((g^0)^{-1}g^{-1}gg^\nu)\tilde{l}_{\nu,s}(\tau)\Big)\\
&= L_g \psi_{g^0}^{-1} \Big(\sum\nolimits_{\nu=0}^s
\exp_e^{-1}\circ L_{(g^0)^{-1}}(g^\nu)\tilde{l}_{\nu,s}(\tau)\Big)\\
&= L_g \psi_{g^0}^{-1} \Big(\sum\nolimits_{\nu=0}^s
\psi_{g^0}(g^\nu)\tilde{l}_{\nu,s}(\tau)\Big)\\
&= L_g \varphi(g^\nu;\tau h).
\end{align*}
\end{proof}
\begin{remark}
In the proof that $\varphi$ is $G$-equivariant, it was important
that the base point for the chart should transform in the same way
as the internal points $g^\nu$. As such, the interpolatory
function will be $G$-equivariant for a chart that it based at any
one of the internal points $g^\nu$ that parameterize the function,
but will not be $G$-equivariant if the chart is based at a fixed
$g\in G$. Without loss of generality, we will consider the case
when the chart is based at the first point $g_0$.
\end{remark}

We will now consider a discrete Lagrangian based on the use of interpolation on a natural chart, which is given by
\[ L_d(g_0, g_1) = \ext_{\substack{g^\nu\in G; \\g^0=g_0; g^s = g_0^{-1}g_1}} h \sum\nolimits_{i=1}^s b_i L(T\varphi (\{g^\nu\}_{\nu=0}^s;c_i h))\, .\]
To further simplify the expression, we will express the extremal in terms of the Lie algebra elements $\xi^\nu$ associated with the $\nu$-th control point. This relation is given by
\[\xi^\nu=\psi_{g_0}(g^\nu)\, ,\]
and the interpolated curve in the algebra is given by
\[\xi(\xi^\nu;\tau h)=\sum\nolimits_{\kappa=0}^s \xi^\kappa \tilde l_{\kappa,s}(\tau),\]
which is related to the curve in the group,
\[g(g^\nu;\tau h)=g_0 \exp(\xi(\psi_{g_0}(g^\nu);\tau h)).\]
The velocity $\dot \xi=g^{-1} \dot g$ is given by
\[\dot\xi(\tau h)=g^{-1}\dot g(\tau h)=\frac{1}{h}\sum\nolimits_{\kappa=0}^s \xi^\kappa \dot{\tilde
l}_{\kappa,s}(\tau).\]
Using the standard formula for the
derivative of the exponential,
\[T_\xi \exp= T_e L_{\exp(\xi)}\cdot \dexp_{\ad_{\xi}},\]
where
\[\dexp_w=\sum\nolimits_{n=0}^\infty \frac{w^n}{(n+1)!},\]
we obtain the following expression for discrete Lagrangian,
\begin{multline*}
L_d(g_0, g_1) = \ext_{\substack{\xi^\nu\in\mathfrak{g};\\\xi^0=0;\xi^s=\psi_{g_0}(g_1)}}
h \sum\nolimits_{i=1}^s b_i L\Big(L_{g_0} \exp(\xi(c_i h)),\\
T_{\exp(\xi(c_i h))}L_{g_0}\cdot T_e
L_{\exp(\xi(c_i h))}\cdot \dexp_{\ad_{\xi(c_i h)}} (\dot \xi(c_i
h))\Big)\, .
\end{multline*}
More explicitly, we can compute the conditions on the Lie algebra elements for the expression above to be extremal. This implies that
\begin{multline*}
L_d(g_0, g_1) = h \sum\nolimits_{i=1}^s b_i L\Big(L_{g_0} \exp(\xi(c_i h)),\\
T_{\exp(\xi(c_i h))}L_{g_0}\cdot T_e L_{\exp(\xi(c_i h))}\cdot \dexp_{\ad_{\xi(c_i h)}} (\dot \xi(c_i h))\Big)
\end{multline*}
with $\xi^0=0$, $\xi^s=\psi_{g_0}(g_1)$, and the other Lie algebra elements implicitly defined by
\begin{multline*}
0 = h\sum\nolimits_{i=1}^s b_i \left [\frac{\partial L}{\partial g} (c_i h) T_{\exp(\xi(c_i h))} L_{g_0}\cdot T_e L_{\exp(\xi(c_i h))} \cdot \dexp_{\ad_{\xi(c_i h)}} \tilde{l}_{\nu,s}(c_i) \right.\\
\qquad+ \left . \frac{1}{h}\frac{\partial L}{\partial
\dot g}(c_i h) T^2_{\exp(\xi(c_i h))} L_{\exp(\xi(c_i h))}\cdot
T^2_e L_{\exp(\xi(c_i h))}\cdot \ddexp_{\ad_{\xi(c_i
h)}}\dot{\tilde{l}}_{\nu,s}(c_i)\right ] ,
\end{multline*}
for $\nu=1,\ldots,s-1$, and where
\[ \ddexp_w = \sum\nolimits_{n=0}^\infty \frac{w^n}{(n+2)!}\, . \]
This expression for the higher-order discrete Lagrangian, together with the discrete Euler--Lagrange equation,
\[ D_2 L_d(g_0,g_1) + D_1 L_d(g_1, g_2)=0\, ,\]
yields a {\bfi higher-order Lie group variational integrator}.

\subsection{Higher-Order Discrete Euler--Poincar\'e Equations}\index{variational integrator!Euler--Poincar\'e}\index{reduction!Euler--Poincar\'e}
In this section, we will apply discrete Euler--Poincar\'e reduction (see, for example,~\cite{MaPeSh1999}) to the Lie group variational integrator we derived previously, to construct a higher-order generalization of discrete Euler--Poincar\'e reduction.

\paragraph{Reduced discrete Lagrangian.}
We first proceed by computing an expression for the reduced
discrete Lagrangian in the case when the Lagrangian is
$G$-invariant. Recall that our discrete Lagrangian uses
$G$-equivariant interpolation, which, when combined with the
$G$-invariance of the Lagrangian, implies that the discrete
Lagrangian is $G$-invariant as well. We compute the reduced
discrete Lagrangian,
\begin{align*}
l_d(g_0^{-1}g_1)
&\equiv L_d(g_0,g_1)\\
&= L_d(e, g_0^{-1}g_1)\\
&= \ext_{\substack{\xi^\nu\in\mathfrak{g};\\\xi^0=0;\xi^s=\log(g_0^{-1}g_1)}}
h \sum\nolimits_{i=1}^s b_i L\Big(L_{e} \exp(\xi(c_i h)),\\
&\hspace*{1in} T_{\exp(\xi(c_i h))}L_{e}\cdot T_e L_{\exp(\xi(c_i h))}\cdot \dexp_{\ad_{\xi(c_i h)}} (\dot \xi(c_i h))\Big)\\
&= \ext_{\substack{\xi^\nu\in\mathfrak{g};\\ \xi^0=0;\xi^s=\log(g_0^{-1}g_1)}}
h\sum\nolimits_{i=1}^s b_i L\Big(\exp(\xi(c_i h)),\\
&\hspace*{1in} T_e L_{\exp(\xi(c_i
h))}\cdot \dexp_{\ad_{\xi(c_i h)}}(\dot\xi(c_i h))\Big)\, .
\end{align*}
Setting $\xi^0=0$, and $\xi^s=\log(g_0^{-1}g_1)$, we can solve the
stationarity conditions for the other Lie algebra elements
$\{\xi^\nu\}_{\nu=1}^{s-1}$ using the following implicit system of
equations,
\begin{multline*}
0 = h\sum\nolimits_{i=1}^s b_i \left [\frac{\partial L}{\partial g} (c_i h) T_e L_{\exp(\xi(c_i h))} \cdot \dexp_{\ad_{\xi(c_i h)}} \tilde{l}_{\nu,s}(c_i) \right.\\
+ \left . \frac{1}{h}\frac{\partial L}{\partial
\dot g}(c_i h) T^2_e L_{\exp(\xi(c_i h))}\cdot
\ddexp_{\ad_{\xi(c_i h)}}\dot{\tilde{l}}_{\nu,s}(c_i)\right ]
\end{multline*}
where $\nu=1,\ldots, s-1$.

This expression for the reduced discrete Lagrangian is not fully
satisfactory however, since it involves the Lagrangian, as opposed
to the reduced Lagrangian. If we revisit the expression for the
reduced discrete Lagrangian,
\begin{multline*}
l_d(g_0^{-1}g_1)=\\
\ext_{\substack{\xi^\nu\in\mathfrak{g};\\\xi^0=0;\xi^s=\log(g_0^{-1}g_1)}}
h\sum\nolimits_{i=1}^s b_i L\Big(\exp(\xi(c_i h)), T_e L_{\exp(\xi(c_i
h))}\cdot \dexp_{\ad_{\xi(c_i h)}}(\dot\xi(c_i h))\Big)\, ,
\end{multline*}
we
find that by $G$-invariance of the Lagrangian, each of the terms
in the summation,
\[ L\Big(\exp(\xi(c_i h)), T_e L_{\exp(\xi(c_i
h))}\cdot \dexp_{\ad_{\xi(c_i h)}}(\dot\xi(c_i h))\Big)\, ,\] can
be replaced by
\[ l\Big(\dexp_{\ad_{\xi(c_i h)}}(\dot\xi(c_i h))\Big)\, ,\]
where $l:\mathfrak{g}\rightarrow \mathbb{R}$ is the {\bfi reduced
Lagrangian} given by
\[ l(\eta)= L(L_{g^{-1}}g,TL_{g^{-1}}\dot g) = L(e,\eta), \]
where $\eta=TL_{g^{-1}}\dot g\in\mathfrak{g}$.

From this observation, we have an expression for the reduced
discrete Lagrangian in terms of the reduced Lagrangian,
\[l_d(g_0^{-1}g_1) =
\ext_{\substack{\xi^\nu\in\mathfrak{g};\\\xi^0=0;\xi^s=\log(g_0^{-1}g_1)}}
h\sum\nolimits_{i=1}^s b_i l\Big(\dexp_{\ad_{\xi(c_i h)}}(\dot\xi(c_i
h))\Big)\, .\] As before, we set $\xi^0=0$, and
$\xi^s=\log(g_0^{-1}g_1)$, and solve the stationarity conditions
for the other Lie algebra elements $\{\xi^\nu\}_{\nu=1}^{s-1}$
using the following implicit system of equations,
\begin{align*}
0 &= h\sum\nolimits_{i=1}^s b_i \left [\frac{\partial l}{\partial \eta}
(c_i h) \ddexp_{\ad_{\xi(c_i
h)}}\dot{\tilde{l}}_{\nu,s}(c_i)\right ]\, ,
\end{align*}
where $\nu=1,\ldots, s-1$.

\paragraph{Discrete Euler--Poincar\'e equations.}
As shown above, we have constructed a higher-order reduced
discrete Lagrangian that depends on
\[ f_{kk+1}\equiv g_k g_{k+1}^{-1}.\]
We will now recall the derivation of the discrete Euler--Poincar\'e equations, introduced in \cite{MaPeSh1999}. The variations in $f_{kk+1}$ induced by variations in $g_k$, $g_{k+1}$ are computed as follows,
\begin{align*}
\delta f_{kk+1} &= -g_k^{-1} \delta g_k g_k{-1} g_{k+1} + g_k^{-1}\delta g_{k+1}\\
&= TR_{f_{kk+1}}(- g_k^{-1}\delta g_k + \Ad_{f_{kk+1}} g_{k+1}\delta g_{k+1})\, .
\end{align*}
Then, the variation in the discrete action sum is given by
\begin{align*}
\delta \mathbb{S}
&= \sum\nolimits_{k=0}^{N-1} l'_d (f_{kk+1}) \delta f_{kk+1}\\
&= \sum\nolimits_{k=0}^{N-1} l'_d (f_{kk+1}) TR_{f_{kk+1}}(- g_k^{-1}\delta g_k + \Ad_{f_{kk+1}} g_{k+1}\delta g_{k+1})\\
&= \sum\nolimits_{k=1}^{N-1} \left [ l'_d(f_{k-1k})TR_{f_{k-1k}}\Ad_{f_{k-1k}} - l'_d(f_{kk+1})TR_{f_{kk+1}}\right ] \vartheta_k \, ,
\end{align*}
with variations of the form $\vartheta_k = g_k^{-1} \delta g_k$. In computing the variation of the discrete action sum, we have collected terms involving the same variations, and used the fact that $\vartheta_0=\vartheta_N=0$. This yields the {\bfi discrete Euler--Poincar\'e equation},
\begin{align*}
l'_d(f_{k-1k})TR_{f_{k-1k}}\Ad_{f_{k-1k}} -
l'_d(f_{kk+1})TR_{f_{kk+1}}&=0, & k&=1,\ldots, N-1.
\end{align*}
For ease of reference, we will recall the expressions from the
previous discussion that define the {\bfi higher-order reduced
discrete Lagrangian},
\[
l_d(f_{kk+1}) =h\sum\nolimits_{i=1}^s b_i l\Big(\dexp_{\ad_{\xi(c_i
h)}}(\dot\xi(c_i h))\Big)\, ,
\]
where
\[ \xi(\xi^\nu;\tau h) = \sum\nolimits_{\kappa=0}^s \xi^\kappa \tilde{l}_{\kappa,s}(\tau)\, ,\]
and
\begin{align*}
\xi^0 &= 0\, ,\\
\xi^s &= \log(f_{kk+1})\, ,
\end{align*}
and the remaining Lie algebra elements $\{\xi^\nu\}_{\nu=1}^{s-1}$, are defined implicitly by
\begin{align*}
0 &= h\sum\nolimits_{i=1}^s b_i \left [\frac{\partial l}{\partial \eta}
(c_i h) \ddexp_{\ad_{\xi(c_i
h)}}\dot{\tilde{l}}_{\nu,s}(c_i)\right ],
\end{align*}
for $\nu=1,\ldots,s-1$, and where
\[
\ddexp_w = \sum\nolimits_{n=0}^\infty \frac{w^n}{(n+2)!}\, .
\]
When the discrete Euler--Poincar\'e equation is used in
conjunction with the higher-order reduced discrete Lagrangian, we
obtain the {\bfi higher-order Euler--Poincar\'e equations}.

\subsection{Example: Lie Group Velocity Verlet}\label{comp}
We will now construct a Lie group analogue of the velocity Verlet method for the free rigid body. The velocity Verlet method can be derived from the context of discrete mechanics by considering the following discrete Lagrangian,
\[ L_d(q_k, q_{k+1})=\frac{h}{2}\left[L\left(q_k,\frac{q_{k+1}-q_k}{h}\right)+L\left(q_{k+1},\frac{q_{k+1}-q_k}{h}\right)\right],\]
which corresponds to using a piecewise linear interpolant, and the trapezoidal rule to approximate the integral.

In the case of the free rigid body, the Lagrangian is given by,
\[ L(R,\dot{R})=\frac{1}{2}\Omega J\Omega^T=\frac{1}{2}\tr{S(\Omega) J_d S(\Omega)^T}
.\]
Here, $J_d$ is a modified moment of inertia that is related to the usual moment of inertia by the relations, $J_d=\frac{1}{2}(\tr{J}I_{3\times 3}-2J)$, and $J=\tr{J_d}I_{3\times 3}-J_d$. From the kinematic relation $S(\Omega)=R^T\dot{R}$, we have that,
\[ S(\Omega_k) = R_k^T \dot R_k \approx R_k\frac{R_{k+1}-R_k}{h}=\frac{1}{h}(F_k-I_{3\times 3}),\]
where $F_k=R_k^T R_{k+1}$. Then, the discrete Lagrangian for the velocity Verlet method applied to the free rigid body is given by,
\begin{align*}
L_d(R_k, R_{k+1})
&=2\cdot\frac{h}{2}\frac{1}{2}\frac{1}{h^2}\tr{(F_k-I_{3\times 3})^T J_d(F_k-I_{3\times 3})}\\
&=\frac{1}{2h}\tr{(F_k-I_{3\times 3})(F_k-I_{3\times 3})^T J_d}\\
&=\frac{1}{h}\tr{(I_{3\times 3}-F_k)J_d},
\end{align*}
where in the second to last equality, we used the fact that $\tr{AB}=\tr{BA}$, and in the last equality, we used the fact that $F_k$ is an orthogonal matrix, $J_d$ is symmetric, and $\tr{AB}=\tr{B^T A^T}$.

Recall that $\frac{\partial R^{T}}{\partial R}\cdot \delta R=-R^{T}(\delta R) R^{T}$. Furthermore, the variation of $R_k$ is given by,
\[\delta R_k=R_k \eta_k,\]
where $\eta_k\in\mathfrak{so}(3)$ is a variation represented by a skew-symmetric matrix and vanishes at $k=0$ and $k=N$. We may now compute the constrained variation of $F_k=R_k^T R_k$, which yields,
\begin{align*}
\delta F_k&=\delta R_k^T R_{k+1}+R_k^T \delta R_{k+1}
=\eta_k R_k^T R_{k+1}+R_k^T R_{k+1}\eta_{k+1}
=-\eta_k F_k + F_k \eta_{k+1}.
\end{align*}
Define the discrete action sum to be
\[ \mathfrak{S}_d=\sum\nolimits_{k=0}^{N-1}L_d(R_k, F_k).\]
Taking constrained variations of $F_k$ yields,
\[\delta \mathfrak{S}_d=\sum\nolimits_{k=0}^{N-1}\frac{1}{h}\left\{\tr{-\eta_{k+1}J_d F_k}+\tr{\eta_k F_k J_d}\right\}.\]
Using the fact that the variations $\eta_k$ vanish at the endpoints, we may reindex the sum to obtain,
\[\delta \mathfrak{S}_d=\sum\nolimits_{k=1}^{N-1}\frac{1}{h}\tr{\eta_k(F_k J_d - J_d F_{k-1})}.\]
The discrete Hamilton's principle states that the variation of the discrete action sum should be zero for all variations that vanish at the endpoints. Since $\eta_k$ is an arbitrary skew-symmetric matrix, for the discrete action sum to be zero, it is necessary for $(F_k J_d - J_d F_{k-1})$ to be symmetric, which is to say that,
\[F_{k+1}J_d-J_d F_{k+1}^T - J_d F_k +F_k^T J_d=0.\]
This implicit equation for $F_{k+1}$ in terms of $F_k$, together with the reconstruction equation $R_{k+1}=R_k F_k$, yields the Lie group analogue of the velocity Verlet method.

In practice, in time marching the numerical solution, we need to solve the above equation for $F_{k+1}\in SO(3)$ given $F_k$. This equation is linear in $F_{k+1}$, but it is implicit due to the nonlinear constraint
$F_{k+1}^TF_{k+1}=I_{3\times 3}$. Since $J_d F_k -F_k^T J_d$ is a skew-symmetric matrix, it may be represented as $S(g)$, where $g\in\mathbb{R}^3$, which reduces the equation to the form,
\begin{align}
FJ_d - J_d F^T = S(g).\label{eqn:findf}
\end{align}

We now introduce two iterative approaches to solve \refeqn{findf}
numerically.

\paragraph{Exponential map.}
An element of a Lie group can be expressed as the exponential of an element of its Lie
algebra, so $F\in\SO$ can be expressed as an exponential of $S(f)\in\so$ for some vector
$f\in\mathbb{R}^3$. The exponential can be written in closed form, using Rodrigues' formula,
\begin{align}
F &= \exp{S(f)} = I_{3\times3} + \frac{\sin\norm{f}}{\norm{f}} S(f) +
\frac{1-\cos\norm{f}}{\norm{f}^2}S(f)^2.\label{eqn:rodc}
\end{align}
Substituting \refeqn{rodc} into \refeqn{findf}, we obtain
\begin{align*}
S(g) & = \frac{\sin\norm{f}}{\norm{f}} S(Jf) + \frac{1-\cos\norm{f}}{\norm{f}^2}
S(f\times Jf).
\end{align*}
Thus, \refeqn{findf} is converted into the equivalent vector equation $g=G(f)$, where $G
: \mathbb{R}^3 \mapsto \mathbb{R}^3$ is given by
\begin{align*}
G(f) & = \frac{\sin\norm{f}}{\norm{f}}\, J f + \frac{1-\cos\norm{f}}{\norm{f}^2}\, f
\times J f.
\end{align*}
We use the Newton method to solve $g=G(f)$, which gives the iteration
\begin{align}
f_{i+1} = f_i + \nabla G(f_i)^{-1} \parenth{g- G(f_i)}.\label{eqn:newton}
\end{align}
We iterate until $\norm{g- G(f_i) } < \epsilon$ for a small tolerance $\epsilon > 0$. The
Jacobian $\nabla G(f)$ in \refeqn{newton} can be expressed as
\begin{align*}
\nabla G(f) & = \frac{\cos\norm{f}\norm{f}-\sin\norm{f}}{\norm{f}^3}Jff^T
+ \frac{\sin\norm{f}}{\norm{f}} J\\
& \quad + \frac{\sin\norm{f}\norm{f}-2(1-\cos\norm{f})}{\norm{f}^4}\parenth{f
\times Jf}f^T\\
& \quad +\frac{1-\cos\norm{f}}{\norm{f}^2} \braces{-S(Jf)+S(f)J}.
\end{align*}

\paragraph{Cayley transformation.} Similarly, given $f_c\in\mathbb{R}^3$, the Cayley transformation is a local
diffeomorphism that maps $S(f_c)\in\so$ to $F\in\SO$, where
\begin{align}
    F=\mathrm{cay}\, S(f_c) = (I_{3\times 3}+S(f_c))(I_{3\times
    3}-S(f_c))^{-1}.\label{eqn:FCay}
\end{align}
Substituting \refeqn{FCay} into \refeqn{findf}, we obtain a vector equation $G_c(f_c)=0$
equivalent to \refeqn{findf}
\begin{align}
    G_c(f_c)=g+g\times f_c +(g^Tf_c)f_c-2Jf_c=0,\label{eqn:Gc}
\end{align}
and its Jacobian $\nabla G_c(f_c)$ is written as
\begin{align*}
    \nabla G_c (f_c) = S(g)+(g^Tf_c)I_{3\times 3} +f_c g^T -2J.
\end{align*}
Then, \refeqn{Gc} is solved by using Newton's iteration \refeqn{newton}, and the rotation
matrix is obtained by the Cayley transformation.

For both methods, numerical experiments show that 2 or 3 iterations are sufficient to
achieve a tolerance of $\epsilon=10^{-15}$. Numerical iteration with the Cayley
transformation is a faster by a factor of 4-5 due to the simpler expressions in the iteration. It should be noted that since $F=\exp S(f)$ or $F=\mathrm{cay}\,S(f_c)$, it is automatically a rotation matrix,
even when the equation $g=G(f)$ is not satisfied to machine precision. This computational approach is distinct from directly solving the implicit equation \refeqn{findf} with 9 variables and 6 constraints.

\subsection{Numerical comparisons}
The Lie Group Velocity Verlet method is a second-order symplectic Lie group method, and it is natural to compare it to other second-order accurate methods which fail to preserve either the symplectic or Lie group structure:
\begin{enumerate}[i.]
\item Explicit Midpoint Rule (RK): Preserves neither symplectic nor Lie group properties.
\item Implicit Midpoint Rule (SRK): Symplectic but does not preserve Lie group properties.
\item Crouch-Grossman (LGM): Lie group method but not symplectic.
\end{enumerate}
We consider the energy conservation properties of these integrators, and the extent to which they stay on the rotation group, as a function of step-size. Furthermore, we will explore how the computational cost scales as the step-size is varied.

\begin{figure}[b]
\begin{center}
\subfigure[Computed total energy for 30 seconds]{\includegraphics[height=1.7in]{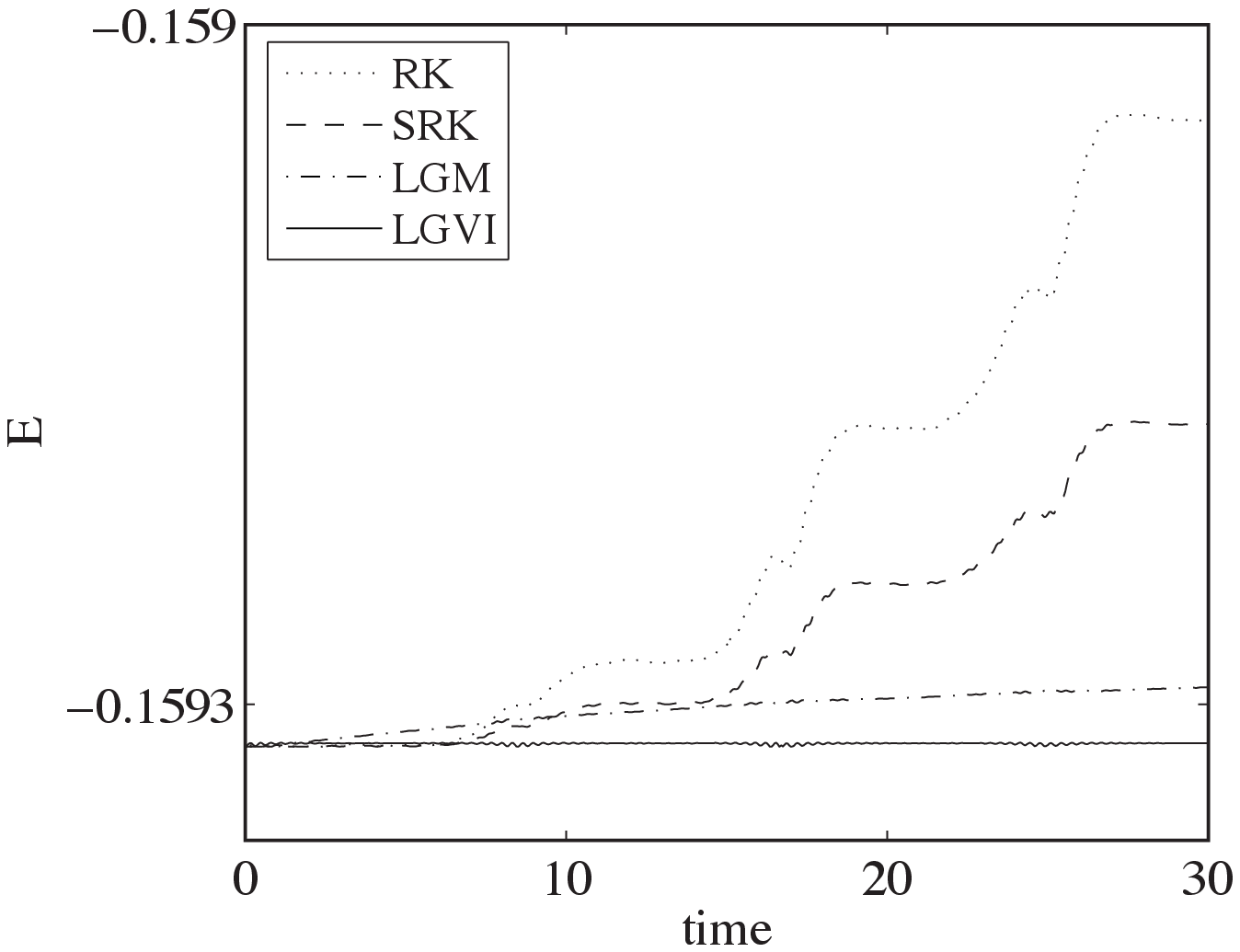}}\qquad
\subfigure[Mean orthogonality error $\|I-R^T R\|$ vs. step size]{\includegraphics[height=1.7in]{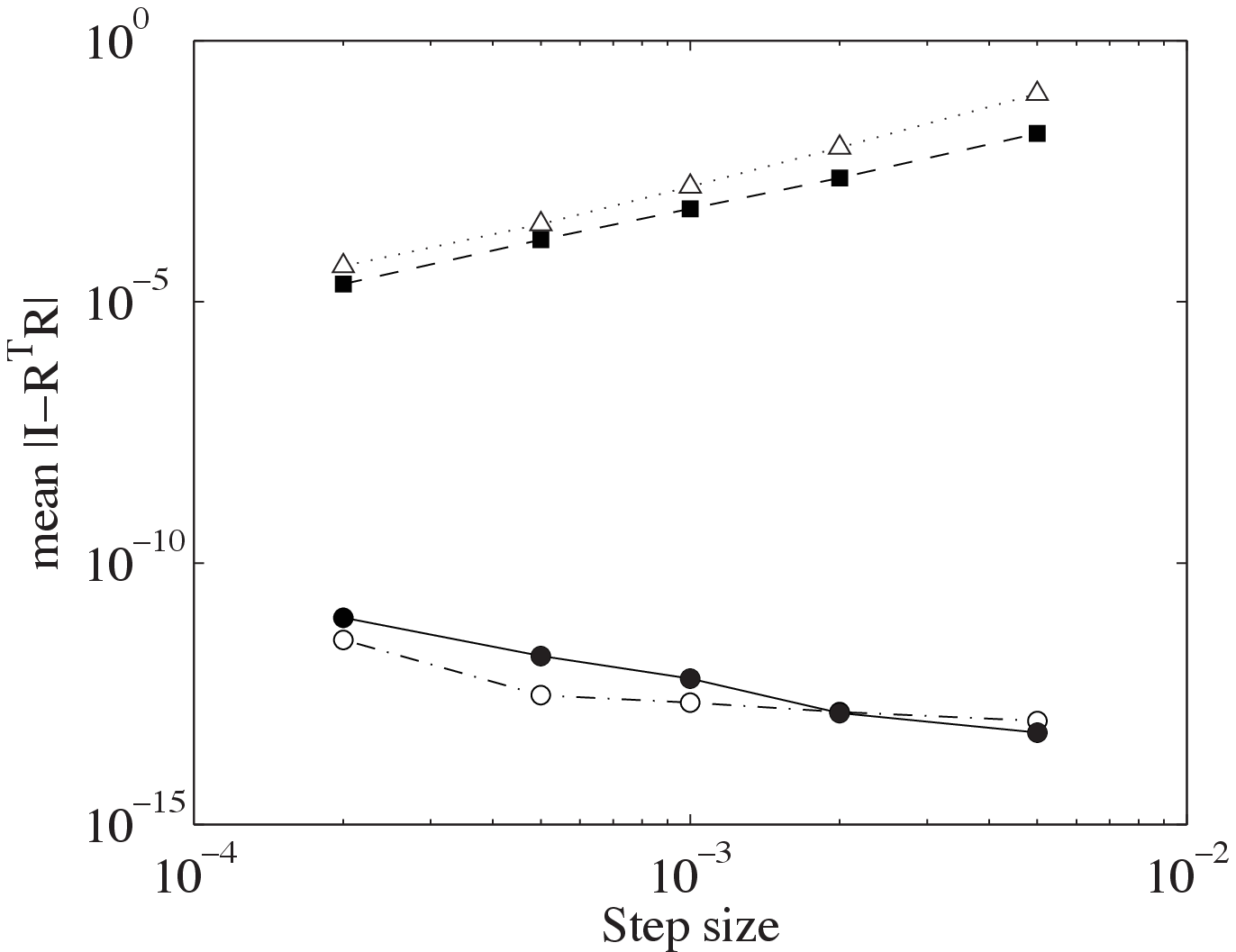}}\\
\subfigure[Mean total energy error $|E-E_0|$ vs. step size]{\includegraphics[height=1.7in]{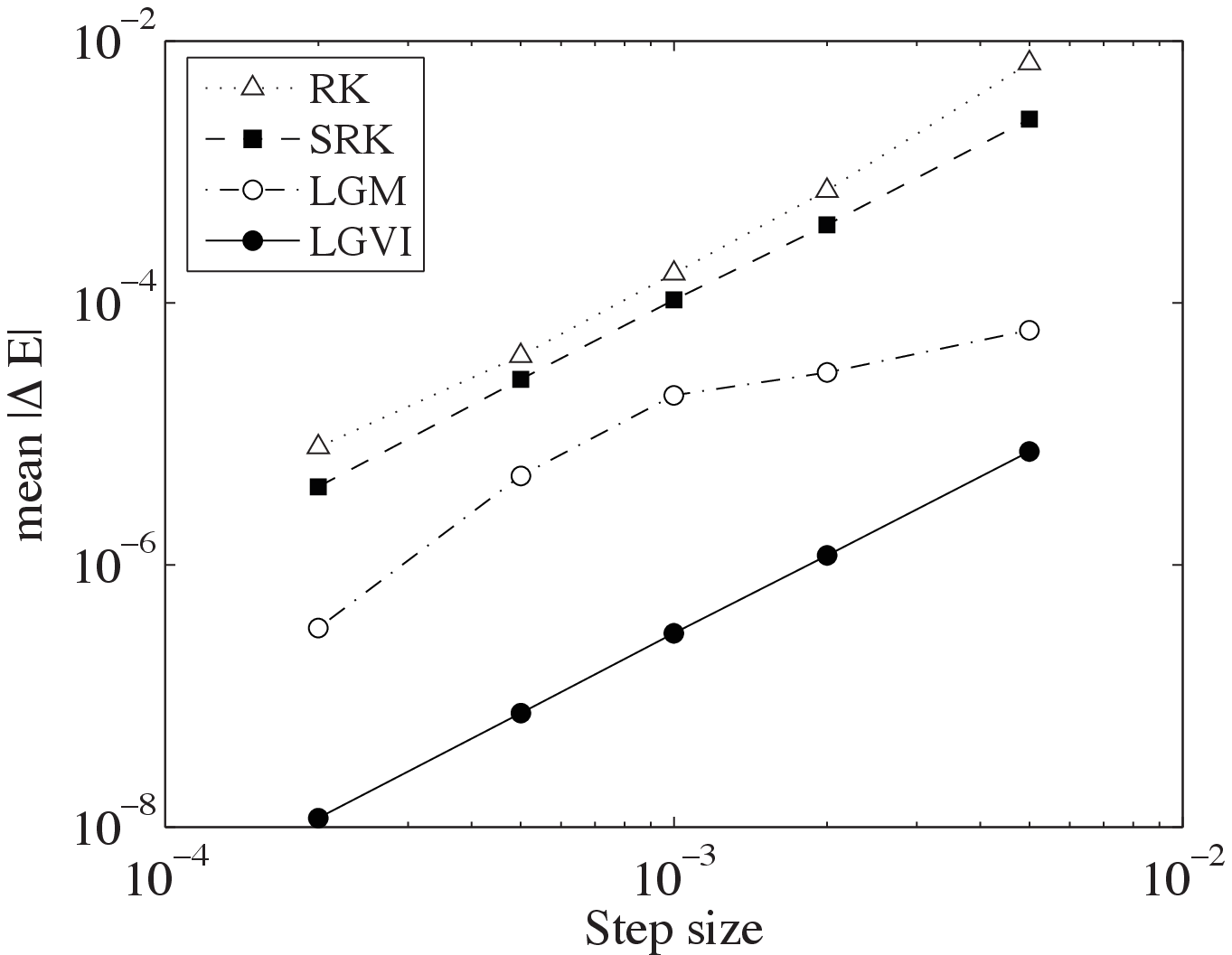}}\qquad
\subfigure[CPU time vs. step size]{\includegraphics[height=1.7in]{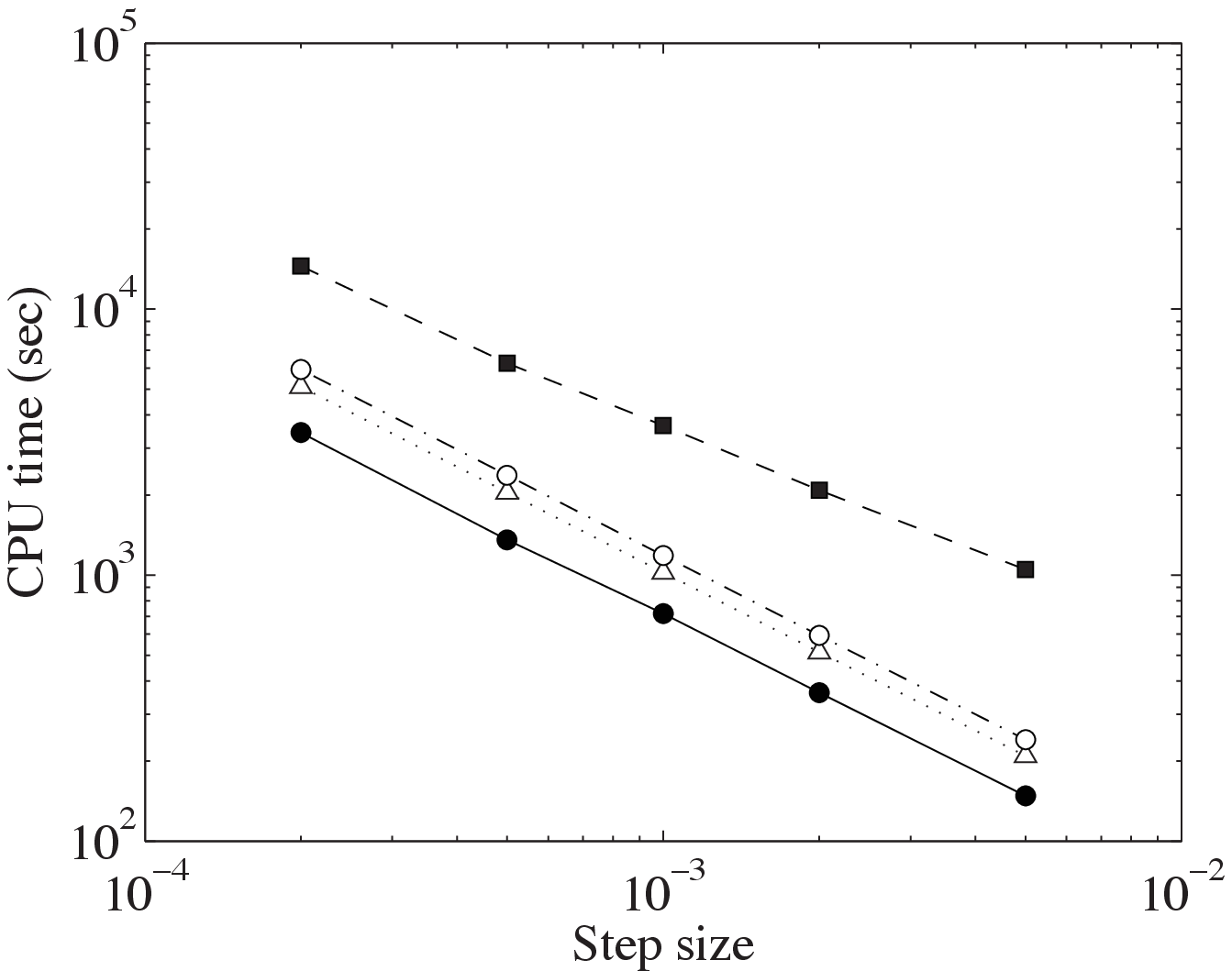}}
\end{center}
\vspace*{-2ex}
\caption{Comparison of Lie Group Velocity Verlet (LGVI) with Explicit Midpoint (RK), Implicit Midpoint (SRK), and Crouch-Grossman (LGM).}
 \label{LGVI_comparison}
\end{figure}

\paragraph{Discussion of numerical results.} Notice that while one typically expects a symplectic method to exhibit good energy behavior, we find that the implicit midpoint method (SRK) does more poorly than the two methods that preserve the Lie group structure---the Crouch-Grossman (LGM) and the Lie Group Velocity Verlet (LGVI) methods. We find that preserving the Lie group structure and the symplectic structure simultaneously yields the best results in terms of energy preservation.

Not surprisingly, the Lie group methods perform the best in terms of the orthogonality error. The increase in orthogonality error as the step-size decreases is associated to the accumulation of round-off error, since smaller step-sizes result in more matrix multiplications. This phenomena can be mitigated by a more careful implementation that utilizes compensated summation~\cite{Ka1965}.

The Lie Group Velocity Verlet method also exhibits the best computational efficiency, since it only requires one force evaluation per time-step, an advantage that it inherits from the vector space version of the Verlet method for separable Hamiltonian systems.

\section{Variational Integrators from Arbitrary One-Step Methods}
This section discusses how one can, given a $p$-th order accurate one-step method for an initial-value problem, and a $q$-th order accurate numerical quadrature formula, construct a variational integrator with  order of accuracy $\min(p,q)$.

A related effort to develop variational integrators from arbitrary one-step methods was introduced in \cite{PaSpZhCu2009}, but this was developed using a nonstandard formulation of discrete variational mechanics on phase space and curve segments \cite{CuPa2009}, which is quite an abstract and involved construction. In contrast, the approach proposed in this section is far more transparent, relies on the well-understood theory of shooting methods for boundary-value problems, and is developed in the standard setting of discrete Lagrangian mechanics~\cite{MaWe2001}. 

\paragraph{Outline of Approach.} Notice that the characterization of the exact discrete Lagrangian associated to Jacobi's solution is expressed in terms of the action integral evaluated on a solution of a two-point boundary-value problem. A standard method of solving a boundary-value problem is to reduce it to the solution of an initial-value problem using the method of shooting. 

We obtain a computable approximation to the exact discrete Lagrangian~\eqref{exact_Ld_Jacobi} in two stages: (i) apply a numerical quadrature formula to the action integral, evaluated along the exact solution of the Euler--Lagrange boundary-value problem; (ii) replace the exact solution of the Euler--Lagrange boundary-value problem by a converged shooting solution associated with a given one-step method.

\paragraph{Shooting-based discrete Lagrangian.}
Given a one-step method $\Psi_h:TQ\rightarrow TQ$, and a numerical quadrature formula $\int_0^h f(x) dx \approx h \sum_{i=0}^n b_i f(x(c_i h))$, with quadrature weights $b_i$ and quadrature nodes $0=c_0<c_1<\ldots<c_{n-1}<c_n=1$, we construct the {\bfi shooting-based discrete Lagrangian},
\begin{equation}
 L_d(q_0, q_1;h)=h\sum\nolimits_{i=0}^n b_i L(q^i, v^i),\label{shooting_Ld_A}
\end{equation}
where
\begin{equation}
(q^{i+1},v^{i+1})=\Psi_{(c_{i+1}-c_i)h}(q^i, v^i),\qquad q^0=q_0, \qquad q^n=q_1.\label{shooting_Ld_B}
\end{equation}
Note that while we formally require that the endpoints are included as quadrature points, i.e., $c_0=0$, and $c_n=1$, the associated weights $b_0$, $b_n$ can be zero, so this is does not constrain the type of quadrature formula we can consider.

\paragraph{Order of discrete Lagrangian.}
The order analysis of the shooting-based discrete Lagrangian depends critically on the global approximation properties of the shooting solution of two-point boundary-value problems.

In general, the Euler--Lagrange equation is a second-order nonlinear differential equation, and it is a standard result in the numerical analysis of the shooting method for nonlinear problems (see, for example, Theorem 2.2.2 of \citep{Keller1968}) that the approximation error in the solution of a boundary-value problem is bounded by the sum of two terms, the first of which is $\mathcal{O}(h^p)$, associated with the global error of the one-step method applied to the initial-value problem, and the second of which is related to the rate of convergence of the nonlinear solver used to determine the appropriate initial condition for the initial-value problem that would lead to the correct terminal boundary condition of the boundary-value problem. In essence, if the solution of the boundary-value problem is isolated and sufficiently regular, then a properly converged shooting method yields a solution of the two-point boundary-value problem that has error $\mathcal{O}(h^p)$, if the underlying one-step method has local truncation error $\mathcal{O}(h^{p+1})$. 

This result allows us to obtain the following theorem on the order of accuracy of the shooting-based discrete Lagrangian, which in turn allows us to establish the order of accuracy of the associated variational integrator.

\begin{theorem}
Given a $p$-th order accurate one-step method $\Psi,$ a $q$-th order accurate quadrature formula, and a Lagrangian $L$ that is Lipschitz continuous in both variables, the associated shooting-based discrete Lagrangian \eqref{shooting_Ld_A}-\eqref{shooting_Ld_B} has order of accuracy $\min(p,q)$.
\end{theorem}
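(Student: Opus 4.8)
The plan is to bound the total error of the shooting-based discrete Lagrangian against the exact discrete Lagrangian by splitting it into two contributions: the quadrature error, which measures how well the numerical quadrature formula approximates the action integral, and the trajectory error, which measures how far the shooting solution deviates from the exact Euler--Lagrange trajectory. The final order of accuracy $\min(p,q)$ should then emerge because these two sources of error are governed by the one-step method (order $p$) and the quadrature rule (order $q$) respectively, and neither can be better than the weaker of the two.

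First I would introduce the exact action along the exact Euler--Lagrange trajectory $q_{01}(t)$ satisfying the boundary conditions $q_{01}(0)=q_0$, $q_{01}(h)=q_1$, so that $L_d^E(q_0,q_1;h)=\int_0^h L(q_{01},\dot q_{01})\,dt$. I would then insert an intermediate quantity: the quadrature formula applied to the \emph{exact} trajectory, namely $h\sum_{i=0}^n b_i L(q_{01}(c_i h),\dot q_{01}(c_i h))$. The triangle inequality then gives
\begin{equation*}
\left| L_d - L_d^E \right| \le \left| L_d - h\sum\nolimits_{i=0}^n b_i L(q_{01}(c_ih),\dot q_{01}(c_ih)) \right| + \left| h\sum\nolimits_{i=0}^n b_i L(q_{01}(c_ih),\dot q_{01}(c_ih)) - L_d^E \right|.
\end{equation*}
The second term is precisely the quadrature error for the smooth integrand $t\mapsto L(q_{01}(t),\dot q_{01}(t))$; since the quadrature is $q$-th order accurate and the integrand is integrated over an interval of length $h$, this term is $\mathcal{O}(h^{q+1})$, hence contributes order $q$ to the discrete Lagrangian in the sense of the variational error analysis convention $L_d = L_d^E + \mathcal{O}(h^{r+1})$.

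For the first term, I would use the Lipschitz continuity of $L$ in both arguments to bound each summand by $h\,|b_i|\,C\bigl(\|q^i - q_{01}(c_ih)\| + \|v^i - \dot q_{01}(c_ih)\|\bigr)$, where $(q^i,v^i)$ is the converged shooting solution. Here I would invoke the result quoted from Theorem 2.2.2 of \citep{Keller1968}: a properly converged shooting method built on a one-step method of order $p$ produces a global approximation to the solution of the two-point boundary-value problem with error $\mathcal{O}(h^p)$ at the nodes, in both position and velocity. Summing over the bounded weights and the single factor of $h$ yields $\mathcal{O}(h^{p+1})$ for this term, hence order $p$.

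The main obstacle I would anticipate is justifying the global $\mathcal{O}(h^p)$ trajectory error cleanly, since this is not a routine local-truncation-error estimate but a statement about the shooting method converging to the correct \emph{isolated} solution of the nonlinear boundary-value problem. This requires the Euler--Lagrange solution to be sufficiently regular and isolated (so that the associated nonlinear shooting equation has an invertible Jacobian uniformly in $h$), and requires the nonlinear solver to be iterated to convergence so that its contribution is dominated by the $\mathcal{O}(h^p)$ discretization error rather than limiting it. Granting the cited global convergence result and these regularity hypotheses, combining the two bounds gives $|L_d - L_d^E| = \mathcal{O}(h^{\min(p,q)+1})$, so that the shooting-based discrete Lagrangian has order of accuracy $\min(p,q)$, and by the variational error analysis theorem quoted earlier the associated variational integrator inherits this order.
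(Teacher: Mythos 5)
Your proposal is correct and follows essentially the same route as the paper: insert the quadrature rule applied to the exact Euler--Lagrange trajectory as an intermediate quantity, bound the quadrature error by $\mathcal{O}(h^{q+1})$, and use the Lipschitz continuity of $L$ together with the global $\mathcal{O}(h^p)$ shooting error from Theorem 2.2.2 of Keller to bound the trajectory contribution by $\mathcal{O}(h^{p+1})$. The caveats you raise about the isolatedness and regularity of the boundary-value solution and full convergence of the nonlinear solver are exactly the hypotheses the paper flags in its discussion preceding the theorem.
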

\begin{proof}
By Theorem 2.2.2 of \citep{Keller1968}, a fully converged shooting solution $(\tilde{q}_{01},\tilde{v}_{01})$, associated with a one-step method $\Psi$ of order $p$, approximates the exact solution  $(q_{01},v_{01})$ of the Euler--Lagrange boundary-value problem with the following global error,
\begin{align*}
q_{01}(c_i h)&=\tilde{q}_{01}(c_i h)+\mathcal{O}(h^p),\\
v_{01}(c_i h)&=\tilde{v}_{01}(c_i h)+\mathcal{O}(h^p).
\end{align*}
If the numerical quadrature formula is order $q$ accurate, then 
\begin{align*}
L_d^E(q_0, q_1;h)&=\int_0^h L(q_{01}(t),v_{01}(t))dt\\
&=\left[h\sum\nolimits_{i=1}^m b_i L(q_{01}(c_i h), v_{01}(c_i h))\right]+\mathcal{O}(h^{q+1})\\
&=\left[h\sum\nolimits_{i=1}^m b_i L(\tilde{q}_{01}(c_i h)+\mathcal{O}(h^{p}), \tilde{v}_{01}(c_i h)+\mathcal{O}(h^{p}))\right]+\mathcal{O}(h^{q+1})\\
&=\left[h\sum\nolimits_{i=1}^m b_i L(\tilde{q}_{01}(c_i h), \tilde{v}_{01}(c_i h))\right]+\mathcal{O}(h^{p+1})+\mathcal{O}(h^{q+1})\\
&=L_d(q_0, q_1;h)+\mathcal{O}(h^{p+1})+\mathcal{O}(h^{q+1})\\
&=L_d(q_0, q_1;h)+\mathcal{O}(h^{\min(p,q)+1}),
\end{align*}
where we used the quadrature approximation error, the error estimates on the shooting solution, and the assumption that $L$ is Lipschitz continuous.
\end{proof}
\begin{remark}
Notice that the numerical quadrature formula introduces an additional factor of $h$ which gives the requisite $\mathcal{O}(h^{p+1})$ local error estimate for the discrete Lagrangian. Any consistent numerical quadrature formula would introduce this factor of $h$, since the integral is over the interval $[0,h]$.
\end{remark}

\begin{remark}
It should be noted that the prolongation-collocation variational integrators introduced in \cite{LeSh2011} can be viewed as a special case of the shooting-based variational integrator. In particular, this involves the collocation method on the prolongation of the Euler--Lagrange vector field as the one-step method, and Euler--Maclaurin formula as the numerical quadrature method.
\end{remark}

\paragraph{More general quadrature formulas.}
While we have primarily discussed numerical quadrature formulas that only depend on the integrand, one could consider more general quadrature formulas that depend on derivatives of the integrand, such as Gauss--Hermite quadrature. This would require information about the second- and higher-derivatives of $q$, which can be obtained by considering the prolongation of the Euler--Lagrange vector field. It is easy to show that prolongations of the vector field can be used to express all higher-derivatives of $q$ in terms of $q$, $\dot{q}$, and derivatives of the Lagrangian $L$. Therefore, the shooting method would provide the necessary information $(q,\dot{q})$ at the quadrature points to deduce the higher-derivatives $(\ddot{q}, q^{(3)}, \ldots)$, which in turn would allow the use of more general quadrature formulas.

\paragraph{Symmetric shooting-based discrete Lagrangians.} We now show that self-adjoint one-step methods and symmetric quadrature formulas yield self-adjoint shooting-based discrete Lagrangians. This guarantees that the resulting variational integrator is symmetric, and hence has even order of accuracy.
\begin{proposition}
Given a self-adjoint one-step method $\Psi_h$, and a symmetric quadrature formula $(c_i+c_{n-i}=1,\, b_i=b_{n-i})$, the associated shooting-based discrete Lagrangian is self-adjoint.
\end{proposition}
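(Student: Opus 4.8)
The plan is to verify the standard algebraic characterization of a self-adjoint (symmetric) discrete Lagrangian, namely $L_d(q_0,q_1;h)=-L_d(q_1,q_0;-h)$, directly from the shooting construction \eqref{shooting_Ld_A}--\eqref{shooting_Ld_B}. First I would write out the two shooting sequences being compared. For $L_d(q_0,q_1;h)$, let $(q^i,v^i)$ denote the converged forward shooting states, related by $(q^{i+1},v^{i+1})=\Psi_{(c_{i+1}-c_i)h}(q^i,v^i)$ with $q^0=q_0$ and $q^n=q_1$. For $L_d(q_1,q_0;-h)$, let $(\bar q^i,\bar v^i)$ denote the backward shooting states, related by $(\bar q^{i+1},\bar v^{i+1})=\Psi_{-(c_{i+1}-c_i)h}(\bar q^i,\bar v^i)$ with $\bar q^0=q_1$ and $\bar q^n=q_0$.

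The central claim is that the backward sequence is exactly the reversal of the forward one, $(\bar q^i,\bar v^i)=(q^{n-i},v^{n-i})$. To establish this I would combine two ingredients. The symmetry of the nodes, $c_i+c_{n-i}=1$, implies that successive node gaps match in reverse, $c_{i+1}-c_i=c_{n-i}-c_{n-i-1}$, so that the forward step from index $n-i-1$ to $n-i$ uses precisely the step size $(c_{i+1}-c_i)h$ appearing in the backward recursion. The self-adjointness of the one-step method, $\Psi_{-\tau}=\Psi_\tau^{-1}$, then lets me rewrite the backward recursion as $(\bar q^i,\bar v^i)=\Psi_{(c_{i+1}-c_i)h}(\bar q^{i+1},\bar v^{i+1})$; substituting the proposed ansatz and invoking the matched step sizes reduces this to the forward recursion, which holds by construction. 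Checking the boundary data, $\bar q^0=q^n=q_1$ and $\bar q^n=q^0=q_0$, confirms that the reversed forward sequence is a valid backward shooting solution, seeded by $\bar v^0=v^n$.

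With the reversal in hand, the conclusion follows from a reindexing of the quadrature sum. Substituting $(\bar q^i,\bar v^i)=(q^{n-i},v^{n-i})$ into $L_d(q_1,q_0;-h)=(-h)\sum_{i=0}^n b_i L(\bar q^i,\bar v^i)$ and setting $j=n-i$ yields $(-h)\sum_{j=0}^n b_{n-j}L(q^j,v^j)$; the weight symmetry $b_j=b_{n-j}$ then collapses this to $-h\sum_{j=0}^n b_j L(q^j,v^j)=-L_d(q_0,q_1;h)$, which is exactly the self-adjointness identity.

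I expect the main obstacle to be the well-definedness underlying the reversal claim: the argument shows that the reversed forward sequence \emph{is} a backward shooting solution, but to identify it with the solution actually used to define $L_d(q_1,q_0;-h)$ I must appeal to the local isolation and uniqueness of the shooting solution already assumed in the order analysis above. A secondary point requiring care is the sign and velocity bookkeeping: one must confirm that self-adjointness of $\Psi$ on the phase space $TQ$ reproduces the forward velocities (rather than their negatives) in the reversed sequence, so that the entire sign change of the discrete Lagrangian is supplied by the $h\mapsto-h$ prefactor together with the weight symmetry.
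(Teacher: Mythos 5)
Your proposal is correct and follows essentially the same route as the paper's proof: establish that the reversed forward shooting sequence solves the backward shooting problem by combining $\Psi_{-\tau}=\Psi_\tau^{-1}$ with the matched node gaps $c_{i+1}-c_i=c_{n-i}-c_{n-i-1}$, then reindex the quadrature sum using $b_i=b_{n-i}$. Your closing remark about needing local uniqueness of the shooting solution to identify the reversed sequence with the one defining $L_d(q_1,q_0;-h)$ is a point the paper leaves implicit, and is a worthwhile observation.
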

\begin{proof} 
The discrete Lagrangian is given by,
\[L_d(q_0, q_1;h)=h\sum\nolimits_{i=0}^n b_i L(q^i, v^i),\]
where the sequence $(q^i, v^i)$ satisfies
\[
(q^{i+1},v^{i+1})=\Psi_{(c_{i+1}-c_i)h}(q^i, v^i),\qquad
 q^0=q_0,\qquad
 q^n=q_1.
\]
The adjoint discrete Lagrangian is given by,
\[L_d^*(q_0,q_1;h)=- L_d(q_1, q_0;-h)=-(-h)\sum\nolimits_{i=0}^n b_i L(\tilde{q}^i, \tilde{v}^i),\]
where the sequence $(\tilde{q}^i,\tilde{v}^i)$ satisfies
\[
(\tilde{q}^{i+1},\tilde{v}^{i+1})=\Psi_{(c_{i+1}-c_i)(-h)}(\tilde{q}^i, \tilde{v}^i), \qquad
 \tilde{q}^0=q_1,\qquad
 \tilde{q}^n=q_0.
\]
Since $\Psi_h$ is self-adjoint, $\Psi_h=\Psi_h^*=\Psi_{-h}^{-1}$, and hence,
\[(\tilde{q}^i,\tilde{v}^i)=\Psi_{(c_{i+1}-c_i)h}(\tilde{q}^{i+1},\tilde{v}^{i+1}).\]
Also, $c_i+c_{n-i}=1$ implies that $(c_{i+1}-c_i)=(c_{n-i}-c_{n-i-1})$. From these two properties, it is easy to see that the two sequences are related by $q^{n-i}=\tilde{q}^i$, and $v^{n-i}=\tilde{v}^i$. Indeed, direct substitution allows us to rewrite $(\tilde{q}^i,\tilde{v}^i)=\Psi_{(c_{i+1}-c_i)h}(\tilde{q}^{i+1},\tilde{v}^{i+1})$ as $(q^{n-i},v^{n-i})=\Psi_{(c_{n-i}-c_{n-i-1})h}(q^{n-i-1},v^{n-i-1})$, which is equivalent to $(q^{i+1},v^{i+1})=\Psi_{(c_{i+1}-c_i)h}(q^i, v^i)$. Then,
\begin{align*}
L_d^*(q_0,q_1;h)&=-(-h)\sum\nolimits_{i=0}^n b_i L(\tilde{q}^i, \tilde{v}^i)=h\sum\nolimits_{i=0}^n b_i L(q^{n-i},v^{n-i})\\
&=h\sum\nolimits_{i=0}^n b_{n-i} L(q^i,v^i)=h\sum\nolimits_{i=0}^n b_i L(q^i,v^i)=L_d(q_0,q_1;h),
\end{align*}
which implies that the discrete Lagrangian is self-adjoint, where we used the fact that $b_i=b_{n-i}$.
\end{proof}

\paragraph{Group-invariant shooting-based discrete Lagrangians.} By the discrete Noether's theorem, a $G$-invariant discrete Lagrangian leads to a momentum-preserving variational integrator. In the Galerkin discrete Lagrangian construction, this can be achieved by choosing a $G$-equivariant interpolation space. We will see that $G$-equivariant one-step methods play a similar role in the construction of $G$-invariant shooting-based discrete Lagrangians. 

Let $\Phi:G\times Q\rightarrow Q$ be a group action of $G$ on $Q$, and let $T\Phi:G\times TQ\rightarrow TQ$ be the associated tangent-lifted action. A one-step method $\Psi_h:TQ\rightarrow TQ$ is $G$-equivariant if it commutes with $T\Phi$, i.e., $\Psi_h\circ T\Phi_g=T\Phi_g\circ \Psi_h$ for all $g\in G$.

\begin{proposition}
Given a $G$-equivariant one-step method $\Psi_h:TQ\rightarrow TQ$, and a $G$-invariant Lagrangian $L:TQ\rightarrow \mathbb{R}$, the associated shooting-based discrete Lagrangian is $G$-invariant.
\end{proposition}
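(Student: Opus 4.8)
The plan is to reduce the $G$-invariance of $L_d$ to an equivariance property of the underlying shooting sequence. Specifically, I would show that the shooting sequence generated for the transformed boundary data $(\Phi_g(q_0),\Phi_g(q_1))$ is exactly the image under the tangent-lifted action $T\Phi_g$ of the shooting sequence generated for $(q_0,q_1)$. Once this is established, the $G$-invariance of $L_d$ follows immediately from the $G$-invariance of $L$, applied term-by-term in the quadrature sum.

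First I would fix the shooting sequence $(q^i,v^i)$ associated with $(q_0,q_1)$, determined by the boundary conditions $q^0=q_0$, $q^n=q_1$ together with the recursion $(q^{i+1},v^{i+1})=\Psi_{\tau_i}(q^i,v^i)$, where $\tau_i:=(c_{i+1}-c_i)h$ and the initial velocity $v^0$ is the shooting parameter fixed by the terminal condition. Setting $(\bar q^i,\bar v^i):=T\Phi_g(q^i,v^i)$, I claim this is a shooting sequence for the transformed endpoints, which I verify by induction: assuming $(\bar q^i,\bar v^i)=T\Phi_g(q^i,v^i)$, the $G$-equivariance $\Psi_h\circ T\Phi_g=T\Phi_g\circ\Psi_h$ gives
\[(\bar q^{i+1},\bar v^{i+1})=\Psi_{\tau_i}\big(T\Phi_g(q^i,v^i)\big)=T\Phi_g\big(\Psi_{\tau_i}(q^i,v^i)\big)=T\Phi_g(q^{i+1},v^{i+1}),\]
closing the induction. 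Since $\Phi_g$ is the base-point projection of $T\Phi_g$, the boundary conditions transform correctly---the base points of $\bar q^0$ and $\bar q^n$ are $\Phi_g(q_0)$ and $\Phi_g(q_1)$ respectively---so $(\bar q^i,\bar v^i)$ satisfies the defining data of the shooting problem with endpoints $(\Phi_g(q_0),\Phi_g(q_1))$.

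With the equivariance of the sequence in hand, I would conclude directly:
\[L_d(\Phi_g(q_0),\Phi_g(q_1);h)=h\sum_{i=0}^n b_i\,L(\bar q^i,\bar v^i)=h\sum_{i=0}^n b_i\,L\big(T\Phi_g(q^i,v^i)\big)=h\sum_{i=0}^n b_i\,L(q^i,v^i)=L_d(q_0,q_1;h),\]
where the third equality uses the $G$-invariance $L\circ T\Phi_g=L$ at each quadrature node.

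The one point requiring care---and the main obstacle---is the well-posedness of the identification in the second step. The shooting sequence is only defined after solving for the shooting parameter $v^0$ so that the terminal condition $q^n=q_1$ holds, so what I have strictly shown is that $T\Phi_g$ carries any converged shooting solution for $(q_0,q_1)$ to a converged shooting solution for $(\Phi_g(q_0),\Phi_g(q_1))$. To assert that $(\bar q^i,\bar v^i)$ is precisely the sequence produced by the construction applied to the transformed endpoints, I would invoke the same isolatedness and regularity hypothesis on the Euler--Lagrange boundary-value problem that underlies the order theorem, which guarantees a locally unique converged shooting solution; combined with the invariance of the Euler--Lagrange dynamics under the $G$-action, the transformed sequence is then necessarily the isolated solution selected by the shooting construction.
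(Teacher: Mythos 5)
Your proof is correct and follows essentially the same route as the paper: use the $G$-equivariance of $\Psi_h$ to show the shooting sequence for the transformed endpoints is the $T\Phi_g$-image of the original sequence, then apply the $G$-invariance of $L$ term-by-term in the quadrature sum. Your closing remark on well-posedness---that identifying the transformed sequence as \emph{the} converged shooting solution requires local uniqueness of that solution---is a point the paper's own proof silently assumes, so your treatment is if anything slightly more careful.
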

\begin{proof}
For notational simplicity, we will denote the group action by $\Phi_g(q)=gq$, and the tangent-lifted action by $T\Phi_g(q,v)=(gq,gv)$. By definition, the shooting-based discrete Lagrangian is given by,
\[
L_d(q_0, q_1)=h\sum\nolimits_{i=0}^n b_i L(q^i,v^i),
\]
where the sequence $(q^i, v^i)$ satisfies
\[
(q^{i+1},v^{i+1})=\Psi_{(c_{i+1}-c_i)h}(q^i, v^i),\qquad
 q^0=q_0,\qquad
 q^n=q_1.
\]
Also,
\[
L_d(gq_0, gq_1)=h\sum\nolimits_{i=0}^n b_i L(\tilde{q}^i,\tilde{v}^i),
\]
where the sequence $(\tilde{q}^i, \tilde{v}^i)$ satisfies
\[
(\tilde{q}^{i+1},\tilde{v}^{i+1})=\Psi_{(c_{i+1}-c_i)h}(\tilde{q}^i, \tilde{v}^i),\qquad
 \tilde{q}^0=gq_0,\qquad
 \tilde{q}^n=gq_1.
\]
By $G$-equivariance of the one-step method $\Psi_h$, we have, 
\[
(gq^{i+1},gv^{i+1})=\Psi_{(c_{i+1}-c_i)h}(gq^i, gv^i),
\]
where $gq^0=gq_0$ and $gq^n=gq_1$. From this, we conclude that the two sequences are related by $(\tilde{q}^i,\tilde{v}^i)=(gq^i, gv^i)$. Hence,
\[
L_d(gq_0, gq_1)=h\sum\nolimits_{i=0}^n b_i L(gq^i,gv^i)=h\sum\nolimits_{i=0}^n b_i L(q^i,v^i)=L_d(q_0,q_1).
\]
where we used the $G$-invariance of the continuous Lagrangian.
\end{proof}
\subsection{Implementation issues}

While one can view the implicit definition of the discrete Lagrangian separately from the implicit discrete Euler--Lagrange equations,
\[ p_0 = -D_1 L_d(q_0, q_1;h),\qquad p_1=D_2 L_d(q_0,q_1;h),\]
in practice, one typically considers the two sets of equations together to implicitly define a one-step method:
\begin{subequations}
\begin{align}
L_d(q_0, q_1;h)&=h\sum\nolimits_{i=0}^n b_i L(q^i, v^i),\\
(q^{i+1},v^{i+1})&=\Psi_{(c_{i+1}-c_i)h}(q^i, v^i),\qquad i=0,\ldots n-1,\label{propagate}\\
q^0&=q_0,\label{bc0}\\
q^n&=q_1,\label{bc1}\\
p_0&= -D_1 L_d(q_0, q_1;h),\label{FL_d-}\\
p_1&=D_2 L_d(q_0,q_1;h)\label{FL_d+}.
\end{align}
\end{subequations}
\paragraph{Equation count.} The unknowns in the equations are $(q^i, v^i)_{i=0}^n$, $q_0$, $q_1$, and $p_0$, $p_1$. If the dimension of the configuration space $Q$ is $m$, then there are $2(n+3)m$ unknowns. Equations \eqref{bc0}-\eqref{FL_d+} yield $4m$ conditions, the propagation equations \eqref{propagate} give $2nm$ conditions, and the initial conditions $(q_0,p_0)$ give the last $2m$ conditions. While it is possible to solve this system directly using a nonlinear root finder, like Newton's method, another possibility is to adopt a shooting approach, which we will now describe.

\paragraph{Shooting-based implementation.} Given initial conditions $(q_0, p_0)$, we let $q^0=q_0$, and guess an initial velocity $v^0$. Using the propagation equations $(q^{i+1},v^{i+1})=\Psi_{(c_{i+1}-c_i)h}(q^i,v^i)$, we obtain  $(q^i, v^i)_{i=1}^n$. Then, we let $q_1=q^n$, and compute $p_1=D_2 L_d(q_0, q_1;h)$. However, unless the initial velocity $v^0$ is chosen correctly, the equation $p_0=-D_1 L_d(q_0,q_1;h)$ will not be satisfied, and one needs to compute the sensitivity of $-D_1 L_d(q_0,q_1;h)$ on $v^0$, and iterate on $v^0$ so that $p_0=-D_1 L_d(q_0,q_1;h)$ is satisfied. This gives a one-step method $(q_0,p_0)\mapsto(q_1,p_1)$. In practice, a good initial guess for $v^0$ can be obtained by inverting the continuous Legendre transformation $p=\partial L/\partial v$.

\subsection{Generalizations}
\paragraph{Hamiltonian Approach.} It might be preferable to express the variational integrator in terms of the Hamiltonian, and an underlying one-step method for Hamilton's equations. We first note that,
\[L(q,v)=\left.pv-H(q,p)\right|_{v=\partial H/\partial p}.\]
This leads to the following discrete Lagrangian,
\[ L_d(q_0,q_1;h) = h \sum\nolimits_{i=0}^n b_i \left[p^i v^i-H(q^i,p^i)\right]_{v^i=\partial H/\partial p (q^i, p^i)},\]
where 
\begin{align*}
(q^{i+1},p^{i+1})=\Psi_{(c_{i+1}-c_i)h}(q^i, p^i),\qquad
q^0=q_0,\qquad
q^n=q_1.
\end{align*}
While it is possible, as before, to solve this together with the implicit discrete Euler--Lagrange equations as a nonlinear system, it can also be solved using a shooting method.
\paragraph{Shooting-based implementation.} Given initial conditions $(q_0,p_0)$, we let $q^0=q_0$, guess an initial momentum $p^0$, and let $v^0= \partial H/\partial p (q^0, p^0)$. Using the propagation equations $(q^{i+1},p^{i+1})=\Psi_{(c_{i+1}-c_i)h}(q^i, p^i)$ and the continuous Legendre transformation $v^i=\partial H/\partial p (q^i, p^i)$, we obtain $(q^i, v^i, p^i)_{i=1}^n$. Then, we let $q_1=q^n$, and compute $p_1=D_2 L_d(q_0, q_1;h)$. As before, unless $p^0$ is correctly chosen, $p_0=-D_1 L_d(q_0,q_1;h)$ will not be satisfied, and one needs to compute the sensitivity of $-D_1 L_d(q_0,q_1;h)$ on $p^0$, and iterate on $p^0$ so that $p_0=-D_1 L_d(q_0,q_1;h)$ is satisfied. This gives a one-step method $(q_0,p_0)\mapsto(q_1,p_1)$.

\paragraph{Type II Hamiltonian Approach.} In some instances, for example, when the Hamiltonian is degenerate, the Type I generating function based approach to variational integrators using discrete Lagrangians is not applicable. A construction based on Type II generating functions, which corresponds to a discrete Hamiltonian~\cite{LeZh2009,LaWe2006}, can be adopted. This is based on the exact discrete Hamiltonian,
\[ H_d^{+,E}(q_0, p_1;h) = p(h) q(h) - \int_0^h\left[p(t)v(t)-H(q(t),p(t))\right]_{v=\partial H/\partial p}dt,\]
where $(q(t),p(t))$ is a solution of Hamilton's equations satisfying the boundary conditions $q(0)=q_0$, $p(h)=p_1$. A computable discrete Hamiltonian can be obtained by taking
\[ H_d^+(q_0,p_1;h) = p^n q^n - h \sum\nolimits_{i=0}^n b_i [ p^i v^i - H(q^i, p^i)]_{v^i=\partial H/\partial p (q^i, p^i)},\]
where 
\begin{align*}
(q^{i+1},p^{i+1})=\Psi_{(c_{i+1}-c_i)h}(q^i, p^i),\qquad
q^0=q_0,\qquad
p^n=p_1.
\end{align*}
The discrete Hamiltonian then yields a symplectic map via the discrete Hamilton's equations,
\begin{align*}
q_1=D_2 H_d^+(q_0,p_1),\qquad
p_0=D_1 H_d^+(q_0,p_1).
\end{align*}

\paragraph{Shooting-based implementation.} Given initial conditions $(q_0,p_0)$, we let $q^0=q_0$, guess an initial momentum $p^0$, and let $v^0= \partial H/\partial p (q^0, p^0)$. Using the propagation equations $(q^{i+1},p^{i+1})=\Psi_{(c_{i+1}-c_i)h}(q^i, p^i)$ and the continuous Legendre transformation $v^i=\partial H/\partial p (q^i, p^i)$, we obtain $(q^i, v^i, p^i)_{i=1}^n$. Then, we let $p_1=p^n$, compute $q_1=D_2 H_d^+(q_0, p_1)$, and iterate on $p^0$ until $p_0=D_1 H_d^+(q_0,p_1)$. 

\subsection{Examples}\label{section_examples}

\paragraph{Planar Pendulum.}
A planar pendulum of mass $m=1$ with the massless rod of length $l=1$ is a Hamiltonian system for which the equations of motion
are
$$
\dot{q}=p,\qquad \dot{p}=-\sin q.
$$
The total energy of the system is given by the Hamiltonian $H(q,p)=\frac{1}{2}p^2-\cos(q)$.

\begin{figure}[b]
\begin{center}
\subfigure[Global error vs. time step. The dotted line is the reference line for the exact order.]{\includegraphics[width=0.49\textwidth]{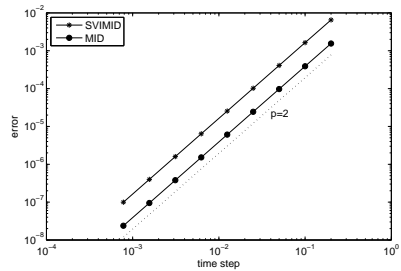}}
\subfigure[Global error vs. CPU time.]{\includegraphics[width=0.49\textwidth]{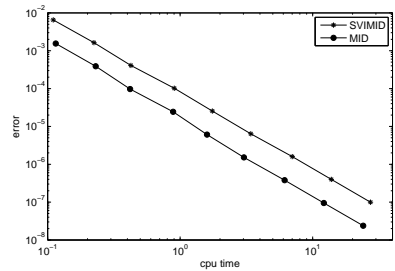}}
\subfigure[Energy error, step-size $h=0.2$]{\includegraphics[width=0.49\textwidth]{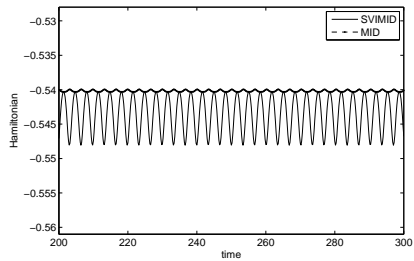}}
\vspace*{-2ex}
\caption{Pendulum. Comparison of a shooting-based variational integrator (SVIMID) and the Implicit Midpoint (MID).
}
\label{fig:genpen_error&cputimeMID}
\end{center}
\end{figure}

The plots in Figure~\ref{fig:genpen_error&cputimeMID},
represent the comparison of the standard implicit midpoint integrator, MID,
which is known to be symplectic and the shooting-based variational integrator, SVIMID. Following the construction outlined in Section~6, we employ 
the midpoint rule for the initial-value problem solved by shooting, combined with the trapezoidal quadrature rule to obtain a
computable discrete Lagrangian. The latter leads to a symmetric method, which is a $2$nd order symplectic integrator. 

\begin{figure}[t]
\begin{center}
\subfigure[Global error vs. time step. The dotted line is the reference line for the exact order.]{\includegraphics[width=0.49\textwidth]{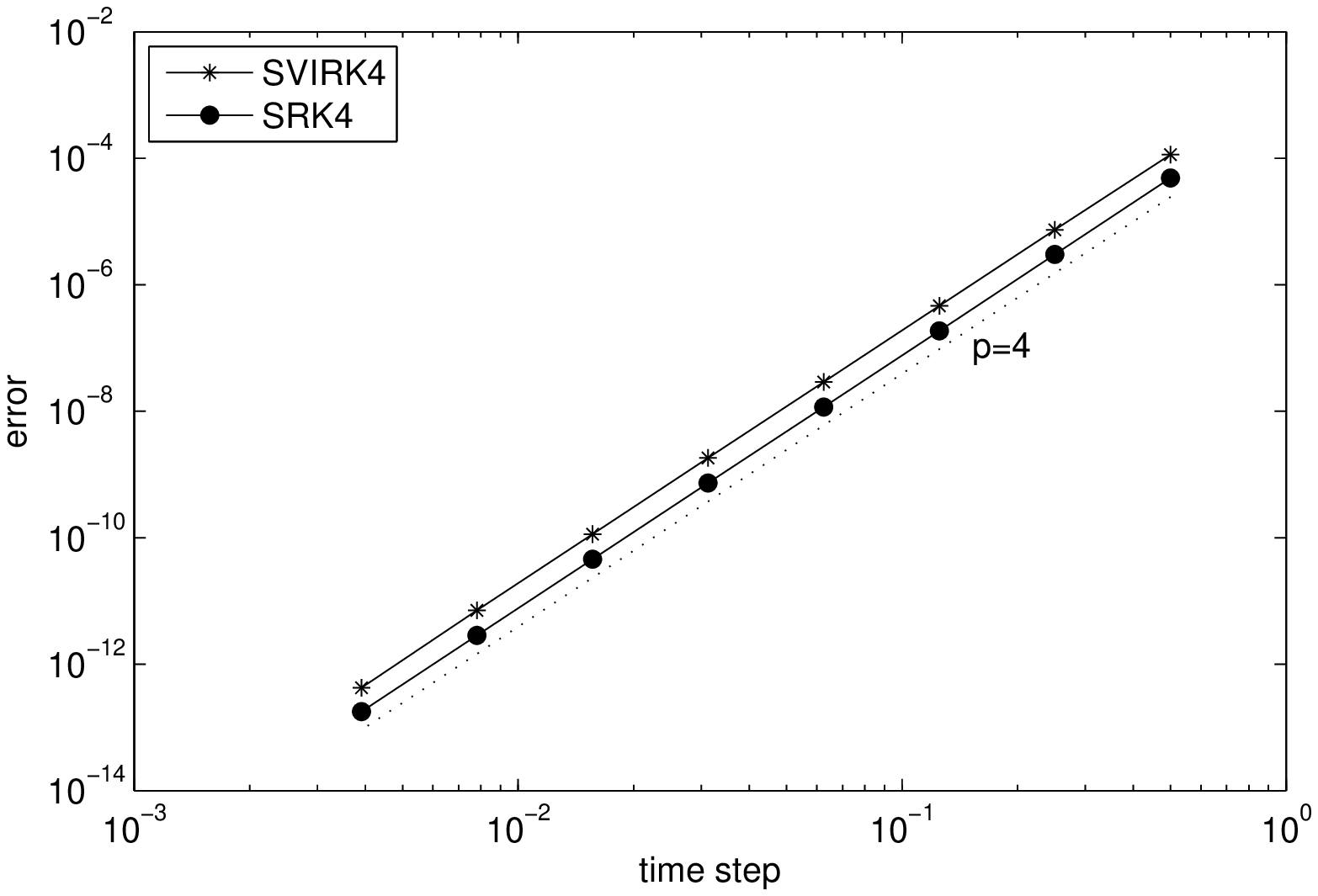}}
\subfigure[Global error vs. CPU time.]{\includegraphics[width=0.49\textwidth]{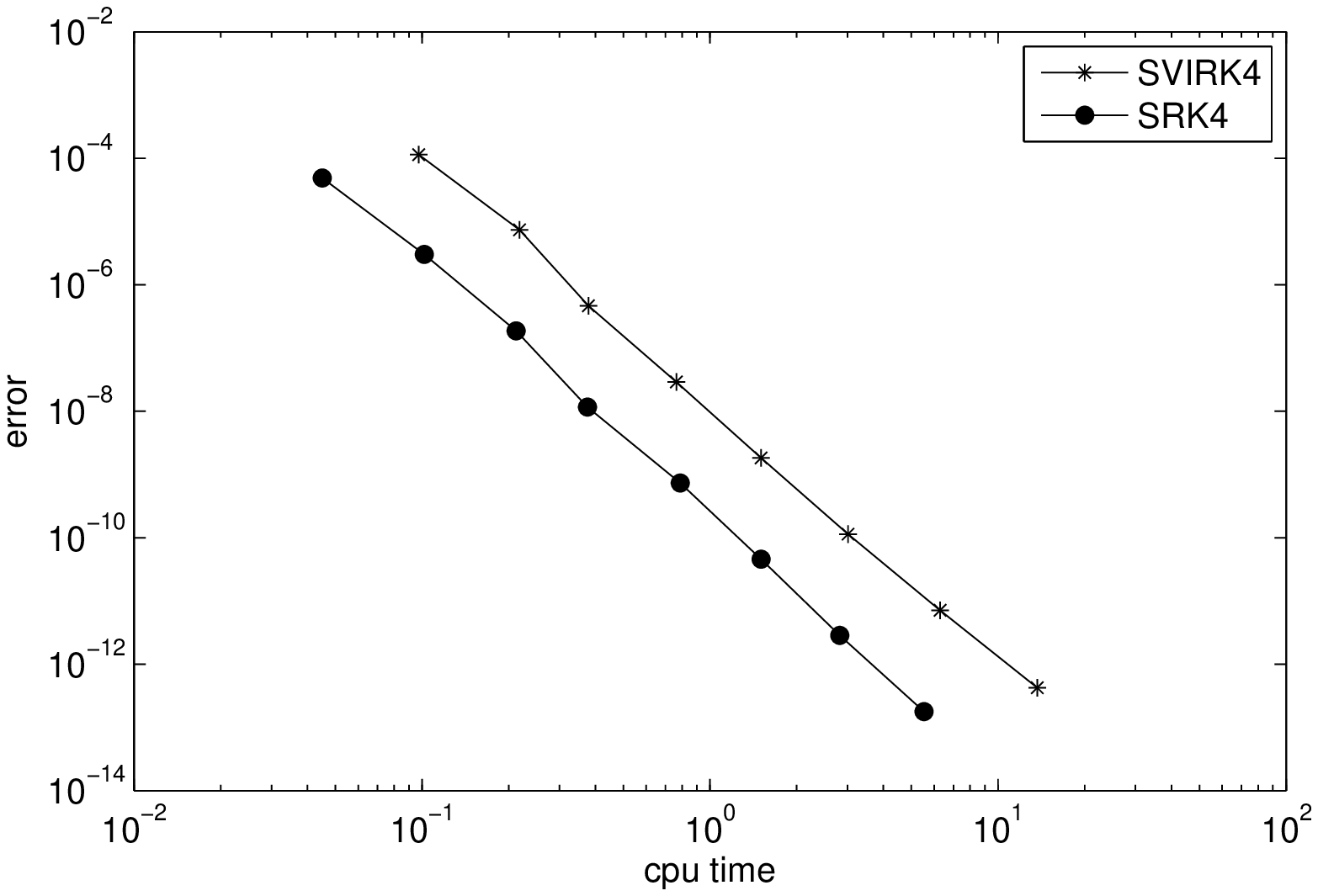}}
\subfigure[Energy error, step size $h=0.2$.]{\includegraphics[width=0.49\textwidth]{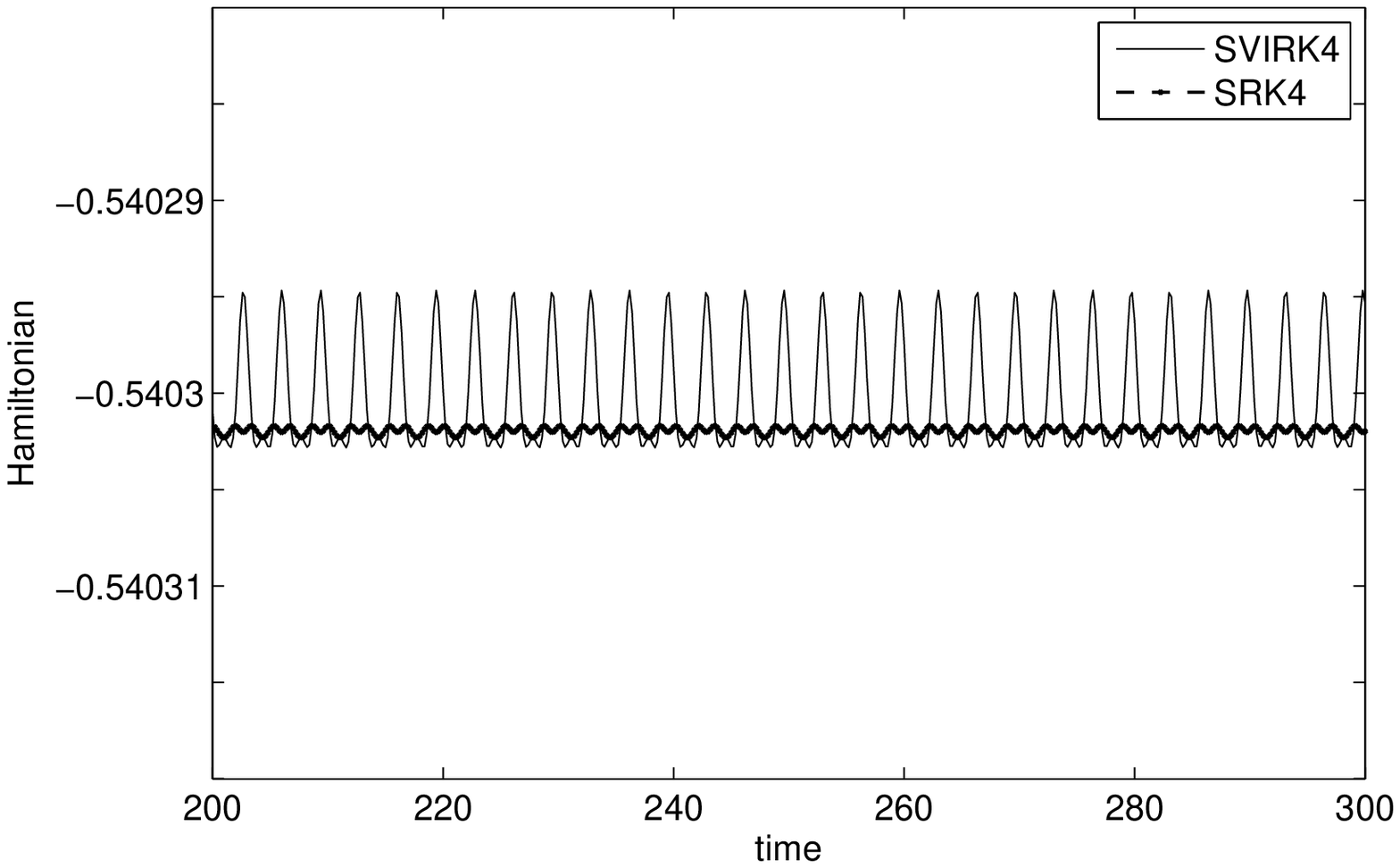}\label{fig:genpen_energyRK}}
\vspace*{-2ex}
\caption{Pendulum. Comparison of a shooting-based variational integrator (SVIRK4) and the symplectic Runge--Kutta (SRK4).}
\label{fig:genpen_error&cputimeRK}
\end{center}
\end{figure}

In Figure~\ref{fig:genpen_error&cputimeRK}, we compare the performance of the shooting-based variational integrator constructed from the explicit Runge--Kutta method and the Simpson quadrature rule with the two-stage
symplectic Runge--Kutta method. Both methods have global error of order $4$. The shooting-based method SVIRK4 exhibits a slightly larger error in energy (cf. Figure~\ref{fig:genpen_energyRK}), but it nevertheless stays bounded for large time-intervals.

\paragraph{Simple Harmonic Oscillator.}
We consider a harmonic oscillator system described by the equations
$$
\dot{q}=p,\qquad \dot{p}=-q.
$$
The total energy of the system is given by the Hamiltonian $H(q,p)=\frac{1}{2}p^2+\frac{1}{2}q^2$. 

In Figure~\ref{fig:simpen_error&cputime}, we compare two shooting-based variational integrators of order 4. The method sviRK4Sim, used above for the 
planar pendulum, involves the 4th order explicit Runge--Kutta integrator and the Simpson quadrature. The method sviRK4EM also employs the 4th order explicit Runge--Kutta method, but instead uses the Euler--Maclaurin quadrature, which involves higher-order derivatives of the integrant. In this case, the Euler--Maclaurin formula was truncated after the second term, and thereby requires the first-derivative of the Lagrangian function.

\begin{figure}[t]
\begin{center}
\subfigure[Global error vs. time step. The dotted line is the reference line for the exact order. ]{\includegraphics[width=0.49\textwidth]{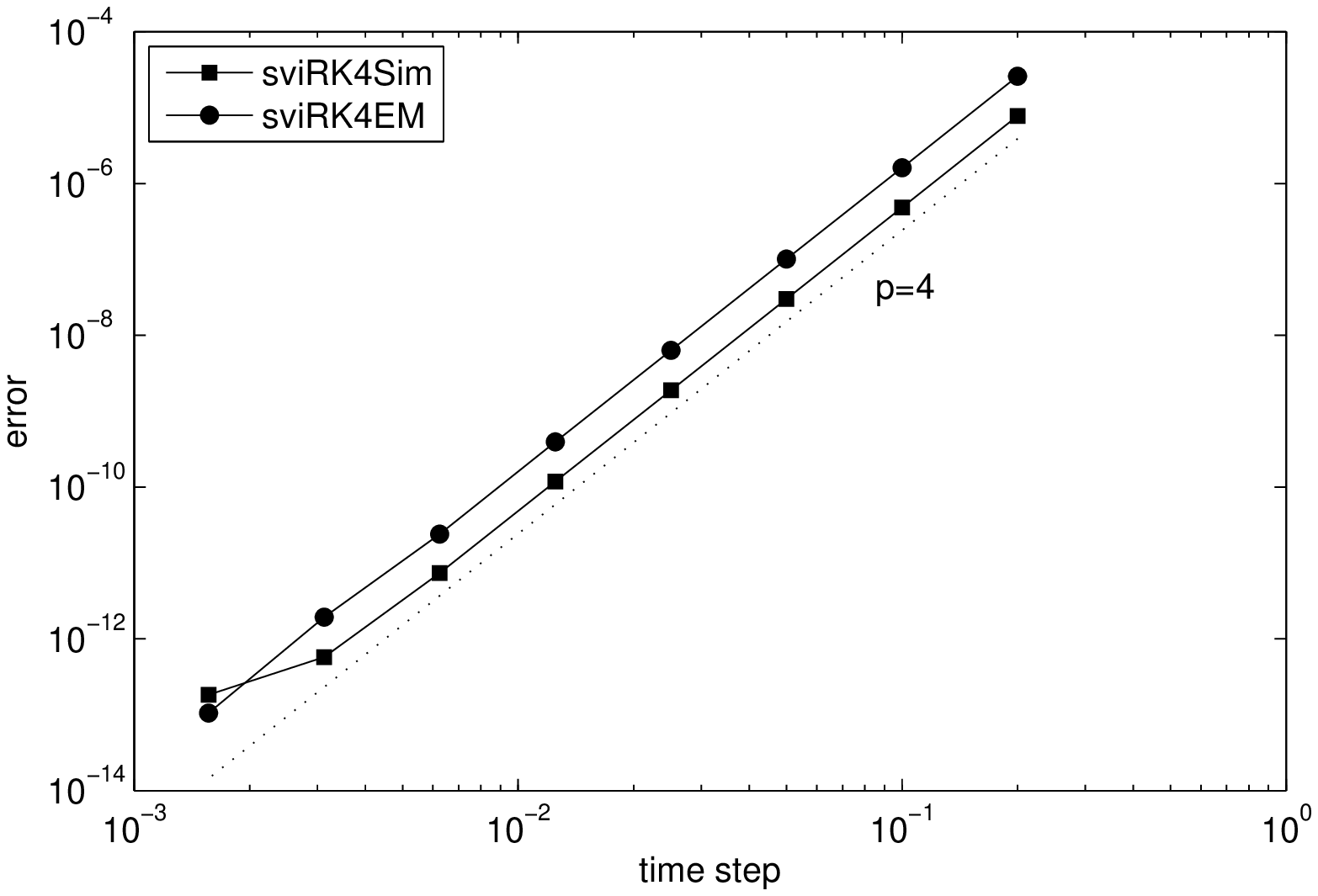}}
\subfigure[Global error vs. CPU time.]{\includegraphics[width=0.49\textwidth]{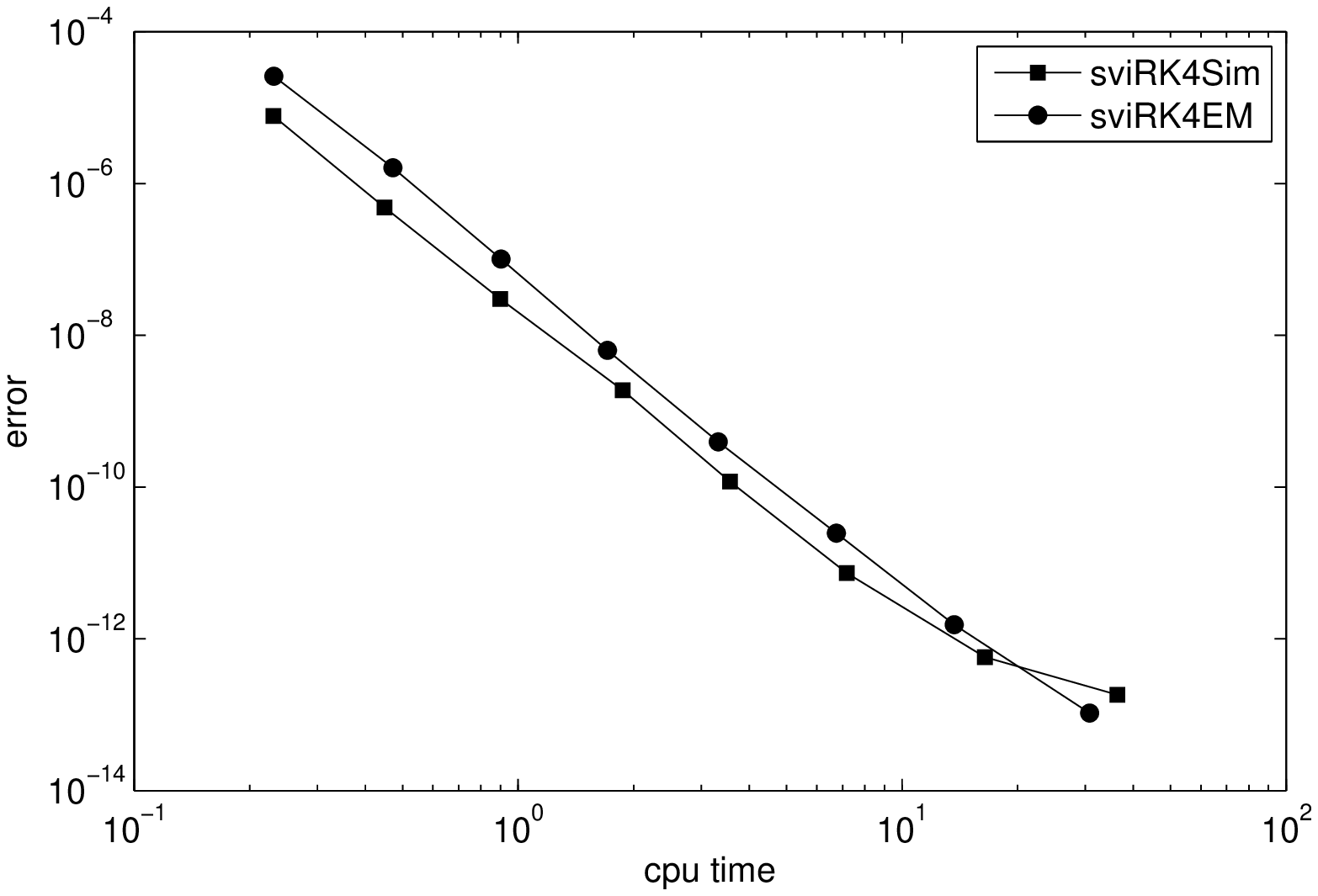}}
\subfigure[Energy error, step-size $h=0.2$.]{\includegraphics[width=0.49\textwidth]{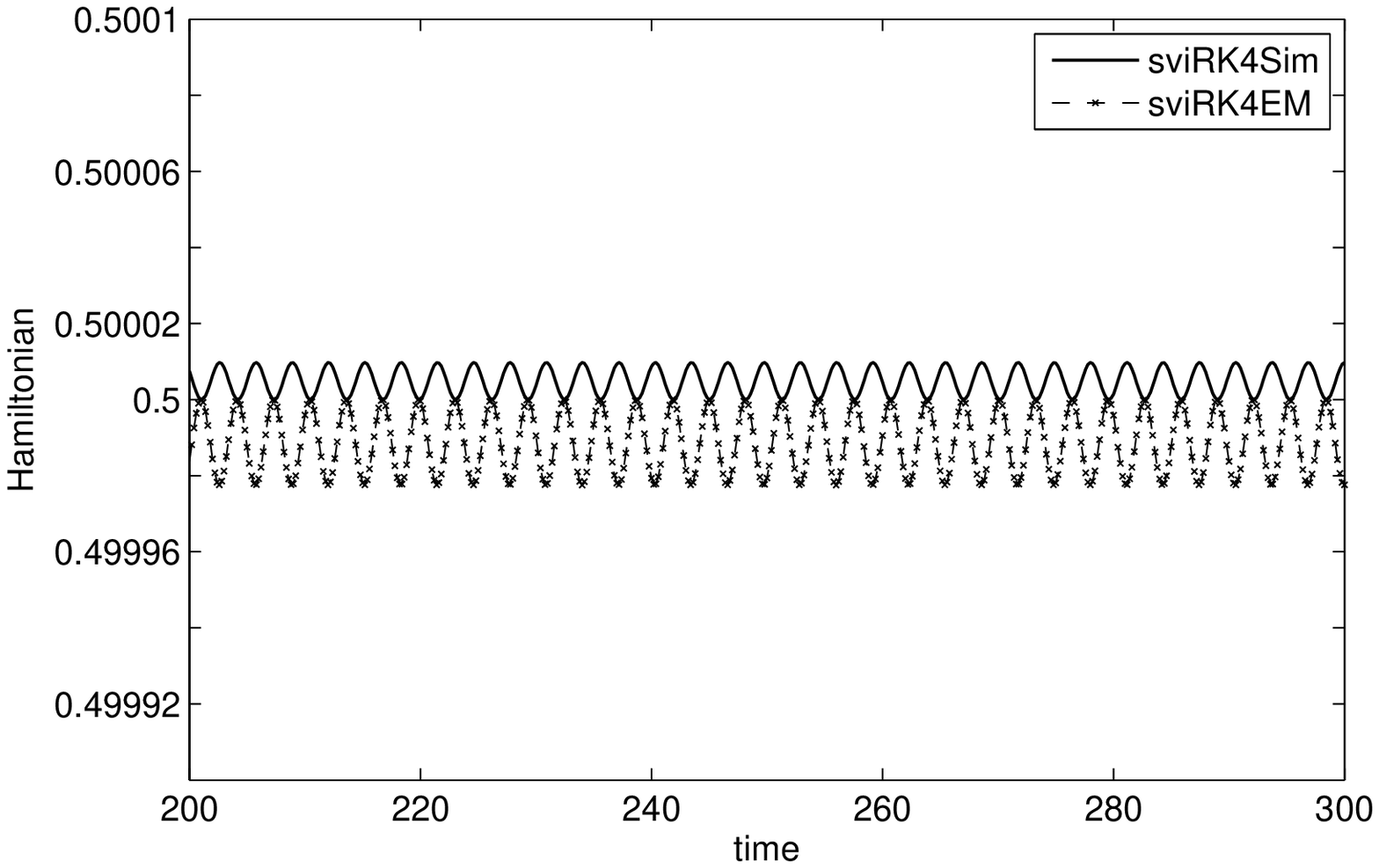}\label{fig:simpen_energy}}
\vspace*{-2ex}
\caption{Simple harmonic oscillator. Comparison of the variational integrator sviRK4Sim based on Simpson quadrature and sviRK4EM based on Euler--Maclaurin quadrature.}
\label{fig:simpen_error&cputime}
\end{center}
\end{figure}

The energy behavior for both sviRK4Sim and sviRK4EM is presented in Figure~\ref{fig:simpen_energy}. We note that both the global error and the energy error is generally better using Simpson's rule, as compared to the Euler--Maclaurin formula, even though the underlying one-step method is identical. This indicates that the specific choice of quadrature formula can have a significant effect on the global error and energy error properties of the constructed variational integrator.

\section{Conclusions} We presented two general techniques for constructing discrete Lagrangians: (i) the Galerkin approach, which depends on a choice of a quadrature formula, and a finite-dimensional function space; (ii) the shooting-based approach, which depends on a choice of a quadrature formula, and a one-step method.

The order of approximation and momentum-conservation properties of a variational integrator are related to the order of approximation and the group-invariance of the discrete Lagrangian, respectively. This results in a substantial simplification in the analysis of variational integrators, since it is easier to verify the approximation and group-invariance properties of the discrete Lagrangian than it is to directly verify the order of accuracy and momentum-conservation properties of the associated variational integrator.

For Galerkin variational integrators, the group-invariance of the discrete Lagrangian can further be reduced to the group-equivariance of the finite-dimensional function space. For shooting-based variational integrators, the order of the discrete Lagrangian is related to the order of the quadrature formula and one-step method, and the group-invariance of the discrete Lagrangian is related to the group-equivariance of the one-step method. Furthermore, the shooting-based implementation allows the variational integrator to partially inherit the computational efficiencies of the underlying one-step method. In particular, a shooting-based variational integrator constructed from an explicit one-step method will be more computationally efficient than one based on an implicit one-step method.

These two approaches provide an explicit link between the construction of variational integrators, approximation theory, and one-step methods for ordinary differential equations. In particular, this allows one to leverage existing theoretical results and techniques in approximation theory and the numerical analysis of time-integration methods in the construction and analysis of variational integrators. 

\section*{Acknowledgements} ML and TS were supported in part by NSF Grant DMS-1001521, and NSF CAREER Award DMS-1010687.

\bibliography{vi_review}
\bibliographystyle{spmpsci}

\end{document}